\documentclass[pdflatex,sn-mathphys-num]{sn-jnl}

\usepackage{graphicx}%
\usepackage{multirow}%
\usepackage{amsmath,amssymb,amsfonts}%
\usepackage{amsthm}%
\usepackage{mathrsfs}%
\usepackage[title]{appendix}%
\usepackage{xcolor}%
\usepackage{textcomp}%
\usepackage{manyfoot}%
\usepackage{booktabs}%
\usepackage{algorithm}%
\usepackage{algorithmicx}%
\usepackage{algpseudocode}%
\usepackage{listings}%
\usepackage{enumitem}

\theoremstyle{thmstyleone}%
\newtheorem{theorem}{Theorem}
\newtheorem{proposition}[theorem]{Proposition}%
\newtheorem{lemma}[theorem]{Lemma}

\newtheorem{corollary}[theorem]{Corollary}

\theoremstyle{thmstyletwo}%
\newtheorem{example}{Example}%
\newtheorem{remark}{Remark}%

\theoremstyle{thmstylethree}%
\newtheorem{definition}{Definition}%

\newcommand{\R}{\mathbb{R}}

\newcommand{\eps}{\varepsilon}

\newcommand{\Csk}{\mathfrak C}
\newcommand{\Cpr}{\mathfrak C'}

\begin{document}

\title[Skew--Parameterized Geometric Constants in Banach Spaces]{Skew--Parameterized Geometric Constants in Banach Spaces}


\author[1]{Qing Du}
\author[1]{Wenwen Zhang}
\author[1]{Zhiyao Fang}
\author[1,*]{Qi Liu}
\affil[1]{School of Mathematics and Statistics, Anqing Normal University, Anqing, 246133, P. R. China}
\affil[$\dagger$]{These authors contributed equally to this work.}
\affil[*]{Corresponding author. Email: liuq67@aqnu.edu.cn}


\abstract
{Building on the family of geometric constants introduced by Amini-Harandi and Rahimi, we define a skew-parameterized family on real normed spaces by replacing the classical symmetric pair with a rotated coefficient pair. This modification reveals new extremal behavior, particularly for asymmetric homogeneous weight functions. We establish reduction formulas, comparison inequalities, and parameter-stability estimates. Under an appropriate differential balance condition, we determine the exact values of these constants on Hilbert spaces and obtain a converse characterization in dimensions at least three. We further derive sufficient conditions for uniform non-squareness and normal structure, together with explicit computations in classical normed spaces. These results provide a unified framework for detecting Hilbertian and fixed-point-related geometric properties of Banach spaces.}

\keywords{Geometric constants, inner product spaces, uniformly non--square spaces, normal structure.}



\maketitle

\section{Introduction}
We first fix standard notations used throughout this paper. Let $X$ stand for a real normed space, with its closed unit ball denoted by $B_{X}=\{x\in X:\|x\|\leq 1\}$ and unit sphere $S_{X}=\{x\in X:\|x\|=1\}$. The set $\operatorname{ext}(B_{X})$ collects all extreme points of $B_{X}$. As core quantitative invariants, geometric constants are widely adopted to characterize diverse geometric properties of Banach spaces, such as inner-product space structure, uniform non-squareness and normal structure.

Over the past decades, researchers have constructed and analyzed various geometric constants derived from symmetric vector combinations $x+y$ and $x-y$. One pioneering representative is the von Neumann-Jordan constant, defined as
\[
C_{NJ}(X)=\sup\left\{\frac{\|x+y\|^{2}+\|x-y\|^{2}}{2(\|x\|^{2}+\|y\|^{2})}:x,y\in X \text{ not both zero}\right\},
\]
which was originally proposed by Clarkson \cite{Clarkson1937} in 1937 and further investigated in \cite{AlonsoMartinPapini2008,Jimenez2006,KatoMaligrandaTakahashi2001,KatoTakahashi1997,TakahashiKato1998}. To extend its applicable scope, Cui et al. \cite{CuiHudzik2015} put forward the generalized von Neumann-Jordan constant $C_{NJ}^{(p)}(X)$ for any $p\in[1,\infty)$:
\[
C_{NJ}^{(p)}(X)=\sup\left\{\frac{\|x+y\|^{p}+\|x-y\|^{p}}{2^{p-1}(\|x\|^{p}+\|y\|^{p})}:x,y\in X \text{ not both zero}\right\}.
\]
Restricting vectors to the unit sphere yields the modified von Neumann-Jordan constant
\[
C_{NJ}'(X)=\sup\left\{\frac{\|x+y\|^{2}+\|x-y\|^{2}}{4}:x,y\in S_{X}\right\},
\]
first introduced by Gao\cite{Gao2006}and Alonso et al. \cite{AlonsoMartinPapini2008}. Later, Yang et al. \cite{YangWang2017} generalized this version into a parametric form
\[
\widetilde{C}_{NJ}^{(p)}(X)=\sup\left\{\frac{\|x+y\|^{p}+\|x-y\|^{p}}{2^{p}}:x,y\in S_{X}\right\},\quad p\in[1,\infty).
\]

Another fundamental geometric index is the James constant $J(X)$, constructed by Gao and Lau \cite{GaoLau1990}:
\[
J(X)=\sup\left\{\min\{\|x+y\|,\|x-y\|\}:x,y\in S_{X}\right\}.
\]
James \cite{James1964} proposed the concept of uniformly non-square Banach spaces, requiring the existence of some $\delta<2$ such that $\min\{\|x+y\|,\|x-y\|\}\leq\delta$ holds for all $x,y\in S_{X}$. Equivalently, a space $X$ is uniformly non-square if and only if $J(X)<2$.

Other typical symmetric-type constants include the geometric mean constant $T(X)$ from Alonso and Llorens-Fuster \cite{AlonsoLlorens2008}:
\[
T(X)=\sup_{u,v\in S_{X}}\sqrt{\|u+v\|\|u-v\|},
\]
the Zbăganu constant introduced in \cite{Zbaganu2001} and developed in \cite{Llorens2008,LlorensReich2010}:
\[
C_{Z}(X)=\sup\left\{\frac{\|x+y\|\|x-y\|}{\|x\|^{2}+\|y\|^{2}}:x,y\in X \text{ not both zero}\right\},
\]
and the parametric constant $A_{2,p}(X)$ established by Baronti and Papini \cite{BarontiPapini2016} (the case $p=\infty$ was deeply discussed in \cite{BarontiCasiniPapini2000}):
\[
A_{2,p}(X)=\sup\left\{\frac{\|x+y\|+\|x-y\|}{2}:x,y\in X,\ \|(\|x\|,\|y\|)\|_{p}\leq 2^{1/p}\right\}.
\]
All the above invariants are built on the symmetric vector pair $\{ru+sv,ru-sv\}$, yet they were studied separately without a unified analytical framework for a long time.

To unify these scattered geometric constants into a single parametric system, Amini-Harandi and Rahimi \cite{Amini2019} introduced a generalized constant family dependent on homogeneous mappings and $\ell_p$-normalized coefficients. Before presenting their definition, we specify the function class $\Lambda$: $\Lambda$ consists of all continuous mappings $\lambda:[0,\infty)\times[0,\infty)\to[0,\infty)$ that are homogeneous of degree one and satisfy the normalization condition $\lambda(1,1)=1$. For any $\lambda\in\Lambda$ and $1\leq p\leq\infty$, their generalized constant reads
\[
C_{\lambda,p}(X)=\sup\left\{\lambda(\|ru+sv\|,\|ru-sv\|):u,v\in S_{X},\ r,s\geq0,\ \|(r,s)\|_{p}=1\right\}.
\]
This unified formula can recover nearly all classical symmetric geometric constants via proper choices of $\lambda$, and Dinarvand \cite{Dinarvand2025} further applied this framework to derive sufficient criteria for Banach spaces to possess normal structure.

Despite the powerful capacity of the symmetric generalized family, its fixed vector pair $ru+sv,ru-sv$ can only capture parallelogram-related geometric defects and cannot reflect space distortion under orthogonal rotation transformation. Motivated by this limitation, this paper constructs a novel generalized geometric constant family built upon a skew vector pair instead of the symmetric counterpart. We replace $\{ru+sv,ru-sv\}$ with the rotated skew vector group
\[
ru+sv,\quad su-rv.
\]
Although this replacement only modifies the sign of one coefficient, it brings essential geometric differences. The linear transformation matrix corresponding to the skew pair is
\[
M(r,s)=\begin{pmatrix} r & s \\ s & -r \end{pmatrix},
\]
which satisfies the matrix identity $M(r,s)^{T}M(r,s)=(r^{2}+s^{2})I_{2}$. For any two orthogonal unit vectors $u,v$ in a Hilbert space, the transformed vectors $ru+sv$ and $su-rv$ correspond to orthogonal coordinate transformation combined with uniform scaling. For general normed spaces, this skew construction provides a brand-new perspective to measure deviation from Euclidean geometry, which cannot be fully captured by the traditional symmetric pair. When taking equal parameters $r=s$, the skew pair degenerates into the classic symmetric pair $u+v,u-v$, which means our new skew constant naturally contains the symmetric framework as a special case and maintains close connections with James-type geometric analysis.

We denote the skew-parameterized geometric constant as $\mathfrak{C}_{\lambda,p}(X)$ to distinguish it from the symmetric constant $C_{\lambda,p}(X)$. Its definition is
\[
\mathfrak{C}_{\lambda,p}(X)=\sup\lambda(\|ru+sv\|,\|su-rv\|),
\]
subject to the normalization constraint $\|(r,s)\|_{p}=1$. This paper establishes a complete theoretical system for the skew constant, with four major contributions summarized as follows.
First, we derive equivalent reduction formulas to convert dual scalar parameters $r,s$ into a single ratio variable $t\in[0,1]$. Unlike the single-branch simplification of symmetric constants, the skew version generates two independent extremal branches unless $\lambda$ is symmetric, which serves as a unique characteristic of skew parameterization.
Second, we establish sharp global upper and lower bounds for $\mathfrak{C}_{\lambda,p}(X)$, together with optimal comparison inequalities linking $\mathfrak{C}_{\lambda,p}(X)$ to the primed symmetric constant $\mathfrak{C}_{\lambda,p}'(X)$ and the James constant. If $\lambda$ is increasing with respect to both arguments, the following inequality holds for $1\leq p<\infty$:
\[
2^{1/2-1/p}\leq \mathfrak{C}_{\lambda,p}'(X)\leq \mathfrak{C}_{\lambda,p}(X)\leq 2^{1-1/p},
\]
and the expression can be interpreted in the standard way for $p=\infty$.
Third, for a broad class of differentiable mappings $\lambda$ satisfying the Hilbert-balance condition, we calculate the exact value of $\mathfrak{C}_{\lambda,p}(X)$ on arbitrary Hilbert spaces, and prove a converse characterization theorem valid for spaces with dimension no less than three. The skew construction fits Hilbertian geometry perfectly, since orthogonal unit vectors $u,v$ satisfy the identity
\[
\|ru+sv\|_{2}^{2}+\|su-rv\|_{2}^{2}=2(r^{2}+s^{2}),
\]
which encodes the core balance relation behind the parallelogram law.
Fourth, we apply the skew geometric constants to judge two critical Banach space properties: uniform non-squareness and normal structure.

The layout of this paper is arranged as follows. Section 2 collects fundamental notations, defines the function subclasses of $\Lambda$ and formally states the rigorous definition of skew geometric constants. Section 3 proves reduction lemmas, subspace invariance principles, global bounding inequalities and stability results with respect to $\lambda$. Section 4 focuses on Hilbert space exact computations, inner-product space characterization theorems, and sufficient conditions for uniform non-squareness and uniform normal structure. Section 5 computes explicit values of skew constants on classical finite-dimensional normed spaces and discusses structural properties such as isometric invariance. Finally, we conclude the paper and propose potential follow-up research directions for weighted skew geometric constants.

\section{Preliminaries and Definition of Skew Geometric Constants}
Throughout $X$ denotes a real normed space with norm $\|\cdot\|$, unit ball $B_X$, and unit sphere $S_X$.  The dimension is at least two unless stated otherwise.  For $1\le p<\infty$ and $(r,s)\in\R^2$ let
\[
\|(r,s)\|_p=(|r|^p+|s|^p)^{1/p},
\]
and put $\|(r,s)\|_\infty=\max\{|r|,|s|\}$.  The conjugate exponent of $p\in(1,\infty)$ is denoted by $q$, so $p^{-1}+q^{-1}=1$.

\begin{definition}
	Let $\Lambda$ be the class of all continuous functions
	\[
	\lambda:[0,\infty)^2\to[0,\infty)
	\]
	which are homogeneous of degree one and satisfy $\lambda(1,1)=1$.  We shall use the following subclasses.
	\begin{enumerate}[label=(\roman*)]
		\item $\Lambda_m$ consists of those $\lambda\in\Lambda$ which are non-decreasing in each variable.
		\item $\Lambda_s$ consists of symmetric functions, $\lambda(a,b)=\lambda(b,a)$.
		\item $\Lambda_c$ consists of those $\lambda\in\Lambda_m$ for which $\lambda=g\circ\mu$, where $g$ is increasing and $\mu$ is convex and non-decreasing in both variables.
		\item $\Lambda_H$ consists of $C^1$ functions $\lambda\in\Lambda_m$ satisfying the Hilbert-balance condition
		\begin{equation}\label{eq:Hbalance}
			y\frac{\partial\lambda}{\partial x}(x,y)\le x\frac{\partial\lambda}{\partial y}(x,y)
			\qquad (0<x\le y).
		\end{equation}
	\end{enumerate}
\end{definition}

The condition \eqref{eq:Hbalance} is exactly the condition appearing in the Amini-Harandi--Rahimi Hilbert-space computation, and it is satisfied by many examples.  For instance,
\[
\lambda(a,b)=\min\{a,b\},\quad
\lambda(a,b)=\sqrt{ab},\quad
\lambda(a,b)=2^{-1/p}(a^p+b^p)^{1/p}\quad (p\ge2)
\]
satisfy the appropriate monotone or limiting form of the balance condition.  When differentiability fails, the arguments below may be read with one-sided derivatives or by approximation.

\begin{definition}
	For $\lambda\in\Lambda$ and $p\in[1,\infty]$, define
	\begin{align}\label{eq:defC}
		\Csk_{\lambda,p}(X)=\sup\{&\lambda(\|ru+sv\|,\|su-rv\|): u,v\in S_X,\ r,s\ge0,\ \|(r,s)\|_p=1\},
	\end{align}
	and
	\begin{equation}\label{eq:defCp}
		\Cpr_{\lambda,p}(X)=
		\sup\left\{\frac{\lambda(\|u+v\|,\|u-v\|)}{2^{1/p}}:u,v\in S_X\right\},
	\end{equation}
	where $2^{1/\infty}=1$.
	
	The constant $\mathfrak{C}'_{\lambda,p}(X)$ was initially defined by Amini-Harandi and Rahimi in \cite{Amini2019}.
\end{definition}

\begin{example}
	Let $\lambda(a,b)=\min\{a,b\}$.  Then
	\[
	\Cpr_{\lambda,\infty}(X)=J(X),
	\]
	where $J(X)$ is the James constant.  Let $\lambda(a,b)=2^{-1/p}(a^p+b^p)^{1/p}$.  Then $\Cpr_{\lambda,p}(X)^p$ is a normalized modified von Neumann-Jordan type expression.  Let $\lambda(a,b)=\sqrt{ab}$.  Then $\Cpr_{\lambda,\infty}(X)$ is the geometric mean constant of Alonso and Llorens-Fuster.
\end{example}

\section{Fundamental Reduction and Global Inequalities}
The first task is to convert the two coefficients $r,s$ into one parameter.  In the symmetric Amini-Harandi--Rahimi constant, this gives the expression involving $\|u+tv\|$ and $\|u-tv\|$.  In the skew setting there are two possible branches because the second vector becomes $tu-v$ or $u-tv$ depending on whether $r\ge s$ or $s\ge r$.

\begin{lemma}\label{lem:reduction}
	Let $X$ be a real normed space, $\lambda\in\Lambda$, and $p\in[1,\infty]$.  Then
	\begin{align}\label{eq:twobranch}
		\Csk_{\lambda,p}(X)=\max\{A_{\lambda,p}(X),B_{\lambda,p}(X)\},
	\end{align}
	where
	\begin{align*}
		A_{\lambda,p}(X)&=\sup_{u,v\in S_X,\ 0\le t\le1}
		\frac{\lambda(\|u+tv\|,\|tu-v\|)}{\|(1,t)\|_p},\\
		B_{\lambda,p}(X)&=\sup_{u,v\in S_X,\ 0\le t\le1}
		\frac{\lambda(\|tu+v\|,\|u-tv\|)}{\|(1,t)\|_p}.
	\end{align*}
	If $\lambda$ is symmetric, then $A_{\lambda,p}(X)=B_{\lambda,p}(X)$, and hence
	\begin{equation}\label{eq:symreduction}
		\Csk_{\lambda,p}(X)=
		\sup_{u,v\in S_X,\ 0\le t\le1}
		\frac{\lambda(\|u+tv\|,\|tu-v\|)}{\|(1,t)\|_p}.
	\end{equation}
\end{lemma}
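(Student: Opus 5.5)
The plan is to split the admissible coefficient pairs according to whether $r\ge s$ or $s\ge r$, and to use the degree-one homogeneity of $\lambda$ to normalize the larger coefficient to $1$.

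\textbf{Case $r\ge s$.} Here $r>0$, since $\|(r,s)\|_p=1$ rules out $r=s=0$. Put $t=s/r\in[0,1]$. Then
\[
\|(1,t)\|_p=1/r,\qquad ru+sv=r(u+tv),\qquad su-rv=r(tu-v).
\]
Homogeneity and positivity of $r$ give
\[
\lambda(\|ru+sv\|,\|su-rv\|)=r\,\lambda(\|u+tv\|,\|tu-v\|)=\frac{\lambda(\|u+tv\|,\|tu-v\|)}{\|(1,t)\|_p}.
\]
Conversely, any $t\in[0,1]$ yields the admissible pair
\[
r=1/\|(1,t)\|_p,\qquad s=t/\|(1,t)\|_p,
\]
which satisfies $r\ge s\ge 0$ and $\|(r,s)\|_p=1$. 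So the supremum over $r\ge s$ equals $A_{\lambda,p}(X)$.

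\textbf{Case $s\ge r$.} Put $t=r/s$. Then $ru+sv=s(tu+v)$ and $su-rv=s(u-tv)$, and the same argument shows that the supremum over $s\ge r$ is $B_{\lambda,p}(X)$.

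Since the two cases cover every admissible $(r,s)$ (the diagonal $r=s$ lies in both), the supremum defining $\Csk_{\lambda,p}(X)$ is the maximum of the two partial suprema. This gives \eqref{eq:twobranch}. For $p=\infty$ the same computations hold with $\|(1,t)\|_\infty=1$; no separate argument is needed.

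\textbf{Symmetric $\lambda$.} Here I want to show $A_{\lambda,p}(X)=B_{\lambda,p}(X)$, and I expect this to be the only step needing an idea. Symmetry alone swaps the arguments, giving
\[
\lambda(\|tu+v\|,\|u-tv\|)=\lambda(\|u-tv\|,\|tu+v\|).
\]
The vector patterns still do not match those of $A$, so a substitution on the sphere is needed. I would try $(u,v)\mapsto(u',v')=(u,-v)$, which keeps both vectors in $S_X$. Then
\[
u-tv=u'+tv',\qquad tu+v=tu'-v'.
\]
Hence every term of $B$ with $(u,v)$ equals the term of $A$ with $(u',v')$ and the same $t$. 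The substitution is an involution of $S_X\times S_X$, so $B\le A$ and $A\le B$. Combining $A=B$ with \eqref{eq:twobranch} yields \eqref{eq:symreduction}.

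Apart from this substitution, the points to check are the nonvanishing of the larger coefficient and the exact correspondence of the constraint $\|(r,s)\|_p=1$ with the normalizing factor $1/\|(1,t)\|_p$.
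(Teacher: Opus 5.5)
Your proposal is correct. The two-case reduction is the same as the paper's, and you are slightly more careful: you check explicitly that every $t\in[0,1]$ arises from an admissible pair $(r,s)$, which the paper leaves implicit.

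For $A_{\lambda,p}(X)=B_{\lambda,p}(X)$ you use a different substitution from the paper. Your map $v\mapsto -v$ turns the $B$-term into $\lambda(\|tu'-v'\|,\|u'+tv'\|)$, so you genuinely need symmetry of $\lambda$ to reorder the arguments. The paper instead swaps $u$ and $v$. Its identity $\lambda(\|tv+u\|,\|v-tu\|)=\lambda(\|u+tv\|,\|tu-v\|)$ uses only $\|v-tu\|=\|tu-v\|$ and no symmetry at all. The reason is that the skew expression $\lambda(\|ru+sv\|,\|su-rv\|)$ and the constraint $\|(r,s)\|_p=1$ are both invariant under $(r,s,u,v)\mapsto(s,r,v,u)$.

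So the swap gives strictly more: $A_{\lambda,p}(X)=B_{\lambda,p}(X)$, and hence \eqref{eq:symreduction}, hold for every $\lambda\in\Lambda$. The symmetry hypothesis is superfluous, and so is the suggestion that the two branches can differ for asymmetric $\lambda$. Your route proves the lemma exactly as stated but hides this fact.
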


\begin{proof}
	Let $r,s\ge0$, $\|(r,s)\|_p=1$.  If $r\ge s$ and $r>0$, write $t=s/r\in[0,1]$.  By homogeneity of $\lambda$,
	\begin{align*}
		\lambda(\|ru+sv\|,\|su-rv\|)
		&=r\lambda(\|u+tv\|,\|tu-v\|) \\
		&=\frac{\lambda(\|u+tv\|,\|tu-v\|)}{\|(1,t)\|_p}.
	\end{align*}
	Taking the supremum over this region gives $A_{\lambda,p}(X)$.  If $s\ge r$ and $s>0$, write $t=r/s$.  Then
	\begin{align*}
		\lambda(\|ru+sv\|,\|su-rv\|)
		&=s\lambda(\|tu+v\|,\|u-tv\|) \\
		&=\frac{\lambda(\|tu+v\|,\|u-tv\|)}{\|(1,t)\|_p},
	\end{align*}
	which gives $B_{\lambda,p}(X)$.  The cases $r=0$ or $s=0$ are included by $t=0$.  This proves \eqref{eq:twobranch}.
	
	If $\lambda$ is symmetric, replace $(u,v)$ by $(v,u)$ in $B_{\lambda,p}(X)$:
	\[
	\lambda(\|tv+u\|,\|v-tu\|)=\lambda(\|u+tv\|,\|tu-v\|).
	\]
	Thus the two suprema are equal.
\end{proof}

\begin{lemma}\label{lem:whole}
	For every $\lambda\in\Lambda$ and $p\in[1,\infty]$,
	\begin{equation}\label{eq:whole}
		\Csk_{\lambda,p}(X)=
		\sup_{x,y\in X,\ (x,y)\ne(0,0)}
		\frac{\lambda(\|x+y\|,\|y_x-x_y\|)}{\|(\|x\|,\|y\|)\|_p},
	\end{equation}
	where the symbolic expression $y_x-x_y$ is to be read as
	\[
	\|y_x-x_y\|=\left\|\|y\|\frac{x}{\|x\|}-\|x\|\frac{y}{\|y\|}\right\|
	\]
	when $x,y\ne0$, with the evident limiting convention when one vector is zero.
	Equivalently, for $x=ru$, $y=sv$ with $u,v\in S_X$, the numerator is
	\[
	\lambda(\|x+y\|,\|s u-r v\|).
	\]
\end{lemma}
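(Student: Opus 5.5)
The plan is to pass between the two descriptions by polar decomposition together with degree-one homogeneity of $\lambda$ and of the $\ell_p$-norm. Write every nonzero pair $(x,y)$ as $x=ru$, $y=sv$ with $r=\|x\|$, $s=\|y\|$, $u=x/\|x\|$, $v=y/\|y\|\in S_X$. Then $\|y_x-x_y\|=\|su-rv\|$ by the reading convention in the statement, and the quantity under the supremum in \eqref{eq:whole} becomes
\[
\frac{\lambda(\|ru+sv\|,\|su-rv\|)}{\|(r,s)\|_p}.
\]
This already gives the ``equivalently'' part of the statement. Put $\rho=\|(r,s)\|_p>0$, $r'=r/\rho$, $s'=s/\rho$. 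Homogeneity of $\lambda$ and of the norms lets us pull out the factor $\rho$ from both the numerator and the denominator, so the quotient equals $\lambda(\|r'u+s'v\|,\|s'u-r'v\|)$ with $\|(r',s')\|_p=1$ and $r',s'\ge0$. Hence every value in \eqref{eq:whole} is an admissible value in \eqref{eq:defC}, and the right-hand side is at most $\Csk_{\lambda,p}(X)$.

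For the reverse inequality, take admissible data $u,v\in S_X$ and $r,s\ge0$ with $\|(r,s)\|_p=1$.
\begin{itemize}
\item If $r,s>0$, set $x=ru$ and $y=sv$. Then $x/\|x\|=u$ and $y/\|y\|=v$, so the quotient in \eqref{eq:whole} equals $\lambda(\|ru+sv\|,\|su-rv\|)$, since the denominator is $1$.
\item If one coefficient vanishes, say $s=0$ and $r=1$, the value in \eqref{eq:defC} is $\lambda(\|u\|,\|v\|)=\lambda(1,1)=1$.
\end{itemize}

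The main obstacle is this degenerate case, because the expression $\|y_x-x_y\|$ is only defined by a ``limiting convention''. I would fix the convention as follows: when $y=0$, interpret $y_x-x_y$ as $-\|x\|v$ for an arbitrary unit vector $v$, so that its norm is $\|x\|$. With this choice the quotient equals $\lambda(\|x\|,\|x\|)/\|x\|=1$, matching the value above, and the case $x=0$ is symmetric.

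To avoid depending on the convention at all, I would instead use continuity of $\lambda$. Approximate $(r,s)=(1,0)$ by $(r_n,s_n)$ with $s_n>0$ and $\|(r_n,s_n)\|_p=1$. Then $\lambda(\|r_nu+s_nv\|,\|s_nu-r_nv\|)\to\lambda(1,1)=1$. So the boundary values are limits of interior ones and do not change either supremum.

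Combining the two inequalities gives \eqref{eq:whole}. The case $p=\infty$ needs no separate treatment, because only homogeneity of $\|\cdot\|_\infty$ is used.
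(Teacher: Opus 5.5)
Your proposal is correct and takes the same route as the paper's proof. You use the polar decomposition $x=ru$, $y=sv$ with $r=\|x\|$, $s=\|y\|$ and degree-one homogeneity to normalize to $\|(r,s)\|_p=1$ for one inequality, then substitute $x=ru$, $y=sv$ for the reverse; your extra handling of the degenerate case $r=0$ or $s=0$ (via the limiting convention giving the value $\lambda(1,1)=1$, or via continuity of $\lambda$) fills in a point the paper passes over silently.
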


\begin{proof}
	If $x=ru$, $y=sv$ with $r=\|x\|$, $s=\|y\|$, $u=x/\|x\|$ and $v=y/\|y\|$, then
	\[
	\frac{\lambda(\|x+y\|,\|su-rv\|)}{\|(r,s)\|_p}
	=\lambda\left(\left\|\frac{r}{\|(r,s)\|_p}u+\frac{s}{\|(r,s)\|_p}v\right\|,
	\left\|\frac{s}{\|(r,s)\|_p}u-\frac{r}{\|(r,s)\|_p}v\right\|\right),
	\]
	by homogeneity.  The coefficients in the last expression have $\ell_p$-norm one.  This proves one inequality.  The reverse inequality is obtained by taking $x=ru$ and $y=sv$ in \eqref{eq:whole}.
\end{proof}

The preceding whole-space form is less elegant than the original symmetric version, but it is useful because it emphasizes that the skew expression is controlled by the norms of $x$ and $y$ and by a normalized exchange of their directions.

\begin{proposition}\label{prop:finite}
	Let $\mathcal F_2(X)$ denote the family of two-dimensional subspaces of $X$.  Then
	\begin{equation}\label{eq:finite}
		\Csk_{\lambda,p}(X)=\sup\{\Csk_{\lambda,p}(E):E\in\mathcal F_2(X)\}.
	\end{equation}
	The same assertion holds for $\Cpr_{\lambda,p}$.
\end{proposition}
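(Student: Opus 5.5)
The plan is to prove the two inequalities in \eqref{eq:finite} separately, using only the definition \eqref{eq:defC} and the fact that the defining expression involves at most two vectors. For the inequality ``$\ge$'', fix $E\in\mathcal F_2(X)$. Since $E$ carries the restricted norm, $S_E=S_X\cap E\subseteq S_X$. For any $u,v\in S_E$ and $r,s\ge0$ with $\|(r,s)\|_p=1$, the vectors $ru+sv$ and $su-rv$ lie in $E$, and their norms are the same whether they are computed in $E$ or in $X$. So every value $\lambda(\|ru+sv\|,\|su-rv\|)$ that is admissible for $\Csk_{\lambda,p}(E)$ is also admissible for $\Csk_{\lambda,p}(X)$. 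Taking suprema gives $\Csk_{\lambda,p}(E)\le\Csk_{\lambda,p}(X)$ for each $E$, and hence the supremum over $E$ is at most $\Csk_{\lambda,p}(X)$.

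For ``$\le$'', take arbitrary admissible data $u,v\in S_X$ and $r,s\ge0$ with $\|(r,s)\|_p=1$. If $u$ and $v$ are linearly independent, put $E=\operatorname{span}\{u,v\}\in\mathcal F_2(X)$. Then $u,v\in S_E$, and the value $\lambda(\|ru+sv\|,\|su-rv\|)$ is admissible for $\Csk_{\lambda,p}(E)$, so it is at most the right-hand side of \eqref{eq:finite}. If $u$ and $v$ are dependent, then $v=\pm u$. Here we use the standing assumption $\dim X\ge2$ to choose $w\in X$ with $u,w$ independent, and set $E=\operatorname{span}\{u,w\}$. The pair $u,v$ again lies in $S_E$, so the same bound holds. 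Taking the supremum over all admissible $(u,v,r,s)$ yields $\Csk_{\lambda,p}(X)\le\sup_E\Csk_{\lambda,p}(E)$.

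The argument for $\Cpr_{\lambda,p}$ is identical, since \eqref{eq:defCp} also depends only on $u,v$ and the norms of $u\pm v$, all of which lie in $\operatorname{span}\{u,v\}$. The factor $2^{1/p}$ is a constant and plays no role.

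There is no real obstacle here. The only point needing care is the degenerate case $v=\pm u$, where $\operatorname{span}\{u,v\}$ is one-dimensional. It is handled by enlarging the span to some two-dimensional $E$, which requires $\dim X\ge2$. The rest is just the observation that norms computed in a subspace agree with norms computed in $X$.
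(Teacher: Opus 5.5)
Your proof is correct and follows the paper's argument. Every admissible expression lives in $\operatorname{span}\{u,v\}$, and the degenerate case $v=\pm u$ is handled by enlarging to a two-dimensional subspace using $\dim X\ge2$; you are slightly more explicit than the paper in spelling out the reverse inequality $\Csk_{\lambda,p}(E)\le\Csk_{\lambda,p}(X)$ via $S_E=S_X\cap E$, which the paper leaves implicit.
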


\begin{proof}
	Every admissible expression in the definition of $\Csk_{\lambda,p}(X)$ involves only two vectors $u,v\in S_X$, hence belongs to $E=\operatorname{span}\{u,v\}$, whose dimension is at most two.  If $u$ and $v$ are linearly dependent, choose any two-dimensional subspace containing them when $\dim X\ge2$.  Therefore the supremum over $X$ is the supremum of the same expressions over two-dimensional subspaces.  The proof for the primed constant is identical.
\end{proof}

We now prove the basic inequalities.  These results are sharp in the sense that the upper bound is attained by spaces with almost square unit balls for natural choices of $\lambda$, while the lower bound is attained on Hilbert spaces under the hypotheses of Section \ref{sec:Hilbert}.

\begin{lemma}\label{lem:blue}
	Let $1<p<\infty$ and let $r,s\ge0$ satisfy $r^p+s^p=1$.  Then
	\[
	r+s\le 2^{1-1/p},
	\]
	and equality holds if and only if $r=s=2^{-1/p}$.  Moreover, if
	\[
	r_n^p+s_n^p=1,
	\qquad r_n+s_n\to2^{1-1/p},
	\]
	then $r_n-s_n\to0$ and $r_n,s_n\to2^{-1/p}$.
	
\end{lemma}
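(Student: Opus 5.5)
The plan is to get the inequality and its equality case from strict convexity of $\phi(x)=x^p$ on $[0,\infty)$ for $1<p<\infty$, and then to obtain the sequential statement by compactness.

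For the inequality, apply the power-mean inequality (equivalently Jensen for $\phi$) to $r,s\ge0$:
\[
\frac{r+s}{2}\le\left(\frac{r^p+s^p}{2}\right)^{1/p}=2^{-1/p}.
\]
Multiplying by $2$ gives $r+s\le 2^{1-1/p}$. An alternative route is Hölder's inequality with the conjugate exponent $q$: $r+s\le (r^p+s^p)^{1/p}(1+1)^{1/q}=2^{1/q}=2^{1-1/p}$.

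For the equality case, strict convexity of $\phi$ for $p>1$ means Jensen's inequality is strict unless $r=s$. (In Hölder's form, equality forces $(r^p,s^p)$ to be proportional to $(1,1)$.) So equality holds exactly when $r=s$, and then $2r^p=1$ gives $r=s=2^{-1/p}$. Conversely, if $r=s=2^{-1/p}$, then $r+s=2\cdot2^{-1/p}=2^{1-1/p}$, so equality is attained.

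For the sequential assertion, the pairs $(r_n,s_n)$ lie in the compact set
\[
K=\{(r,s)\in[0,1]^2: r^p+s^p=1\}.
\]
Suppose, for contradiction, that $(r_n,s_n)\not\to(2^{-1/p},2^{-1/p})$. Then some subsequence stays at distance at least $\eps>0$ from this point. By compactness, a further subsequence converges to some $(r_*,s_*)\in K$ with $(r_*,s_*)\ne(2^{-1/p},2^{-1/p})$. The map $(r,s)\mapsto r+s$ is continuous, so $r_*+s_*=2^{1-1/p}$. The equality case then forces $(r_*,s_*)=(2^{-1/p},2^{-1/p})$, which is a contradiction. Hence $r_n,s_n\to2^{-1/p}$, and consequently $r_n-s_n\to0$.

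The only genuine point is the strictness in the equality characterization, and $p>1$ is exactly what provides it: for $p=1$ every admissible pair gives equality. The compactness step is routine. One could avoid it by a quantitative bound from uniform convexity of $\phi$ on $[0,1]$, but the contradiction argument is cleaner.
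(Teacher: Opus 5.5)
Your proof is correct and essentially matches the paper's, which uses Hölder's inequality (your stated alternative) with the equality condition $r^p=s^p$. For the sequential claim, the paper only remarks that $r+s$ has a unique maximizer on the compact arc, and your subsequence argument is the explicit version of that remark.
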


\begin{proof}
	Holder's inequality gives
	\[
	r+s\le (1^q+1^q)^{1/q}(r^p+s^p)^{1/p}=2^{1/q}=2^{1-1/p}.
	\]
	The equality condition in Holder's inequality gives $r^p=s^p$, hence $r=s=2^{-1/p}$.  The stability assertion follows because the compact set $\{(r,s):r,s\ge0,\, r^p+s^p=1\}$ has a unique maximizer for the continuous function $(r,s)\mapsto r+s$.
\end{proof}
\begin{lemma}\label{lem:prime-section}
	For all $\lambda\in\Lambda$ and $p\in[1,\infty]$,
	\begin{equation}\label{eq:primeleq}
		\Cpr_{\lambda,p}(X)\le \Csk_{\lambda,p}(X).
	\end{equation}
\end{lemma}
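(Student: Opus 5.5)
The inequality follows by exhibiting, for each admissible pair in the definition of $\Cpr_{\lambda,p}(X)$, an admissible choice in the definition of $\Csk_{\lambda,p}(X)$ with the same value. The key observation, already noted in the introduction, is that on the diagonal $r=s$ the skew pair $ru+sv,\ su-rv$ becomes $r(u+v),\ r(u-v)$, which is the symmetric pair up to a scalar.

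\emph{Case $1\le p<\infty$.} Fix $u,v\in S_X$ and take $r=s=2^{-1/p}$. These are nonnegative and satisfy $\|(r,s)\|_p=(2\cdot 2^{-1})^{1/p}=1$, so they are admissible in \eqref{eq:defC}. By homogeneity of degree one of $\lambda$,
\[
\lambda(\|ru+sv\|,\|su-rv\|)=\lambda\bigl(2^{-1/p}\|u+v\|,2^{-1/p}\|u-v\|\bigr)=\frac{\lambda(\|u+v\|,\|u-v\|)}{2^{1/p}} .
\]
The left-hand side is at most $\Csk_{\lambda,p}(X)$. Taking the supremum over $u,v\in S_X$ on the right then gives \eqref{eq:primeleq}.

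\emph{Case $p=\infty$.} Take $r=s=1$, so that $\|(1,1)\|_\infty=1$ and the skew pair is exactly $u+v,\ u-v$. With the convention $2^{1/\infty}=1$, the same computation gives the inequality directly.

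No real obstacle is expected. The only points to check are that the choice $r=s$ is admissible for every $p$, including the endpoint conventions, and that homogeneity can be applied to pull out the common factor $2^{-1/p}$. Homogeneity needs the factor to be nonnegative, which holds here.

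Alternatively, one could argue through Lemma~\ref{lem:reduction} by taking $t=1$ in $A_{\lambda,p}(X)$. The term $\lambda(\|u+v\|,\|u-v\|)/\|(1,1)\|_p$ then appears directly, and it equals $\lambda(\|u+v\|,\|u-v\|)/2^{1/p}$, which yields the same conclusion.
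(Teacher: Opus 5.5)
Your proof is correct and is essentially the paper's argument: take $r=s=2^{-1/p}$ (or $r=s=1$ when $p=\infty$), pull out the common factor by homogeneity, and take the supremum over $u,v\in S_X$. Your alternative through $t=1$ in $A_{\lambda,p}(X)$ from Lemma~\ref{lem:reduction} is also valid, but it is not needed.
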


\begin{proof}
	Take $r=s=2^{-1/p}$ if $p<\infty$ and $r=s=1$ if $p=\infty$.  For $p<\infty$,
	\begin{align*}
		\lambda(\|ru+sv\|,\|su-rv\|)
		&=\lambda(2^{-1/p}\|u+v\|,2^{-1/p}\|u-v\|)\\
		&=2^{-1/p}\lambda(\|u+v\|,\|u-v\|).
	\end{align*}
	Taking the supremum over $u,v\in S_X$ gives \eqref{eq:primeleq}.  The case $p=\infty$ is the same without the factor $2^{-1/p}$.
\end{proof}

\begin{theorem}\label{thm:bounds}
	Let $\lambda\in\Lambda_m$ and $1\le p<\infty$.  Then
	\begin{equation}\label{eq:bounds}
		2^{1/2-1/p}\le \Cpr_{\lambda,p}(X)
		\le \Csk_{\lambda,p}(X)
		\le 2^{1-1/p}.
	\end{equation}
	For $p=\infty$,
	\begin{equation}\label{eq:boundsinf}
		\sqrt2\le \Cpr_{\lambda,\infty}(X)
		\le \Csk_{\lambda,\infty}(X)\le 2.
	\end{equation}
\end{theorem}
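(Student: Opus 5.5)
The plan is to prove the three inequalities in \eqref{eq:bounds} separately. The middle one is Lemma \ref{lem:prime-section}. The upper bound comes from the triangle inequality and Lemma \ref{lem:blue}. The lower bound reduces to the classical estimate $J(X)\ge\sqrt2$. Throughout, I will use one consequence of $\lambda\in\Lambda_m$ together with homogeneity and $\lambda(1,1)=1$: if $0\le a,b\le c$ then
\[
\lambda(a,b)\le\lambda(c,c)=c\lambda(1,1)=c,
\]
and if $a,b\ge c\ge0$ then $\lambda(a,b)\ge c$.

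For the upper bound, fix $u,v\in S_X$ and $r,s\ge0$ with $\|(r,s)\|_p=1$. The triangle inequality gives $\|ru+sv\|\le r+s$ and $\|su-rv\|\le r+s$. By the monotonicity remark, $\lambda(\|ru+sv\|,\|su-rv\|)\le r+s$. For $1<p<\infty$, Lemma \ref{lem:blue} gives $r+s\le 2^{1-1/p}$. For $p=1$ we have $r+s=1=2^{0}$ directly. For $p=\infty$ we have $r,s\le1$, so $r+s\le2$. Taking suprema yields $\Csk_{\lambda,p}(X)\le 2^{1-1/p}$, and $\Csk_{\lambda,\infty}(X)\le2$.

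For the lower bound, by definition \eqref{eq:defCp} it suffices to find $u,v\in S_X$ with
\[
\min\{\|u+v\|,\|u-v\|\}\ge\sqrt2.
\]
The monotonicity remark then gives $\lambda(\|u+v\|,\|u-v\|)\ge\sqrt2$. Hence $\Cpr_{\lambda,p}(X)\ge\sqrt2\cdot 2^{-1/p}=2^{1/2-1/p}$, and for $p=\infty$ we get $\Cpr_{\lambda,\infty}(X)\ge\sqrt2$. The existence of such $u,v$ is exactly the classical inequality $J(X)\ge\sqrt2$ of Gao and Lau \cite{GaoLau1990}, valid whenever $\dim X\ge2$. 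By Proposition \ref{prop:finite} this is a purely two-dimensional statement. I will cite it rather than reprove it.

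The main obstacle is this lower bound, since everything else is routine. If a self-contained argument is wanted, I would work in a two-dimensional subspace $E$. First, take $u,v\in S_E$ such that $u$ is Birkhoff orthogonal to $v$ and $v$ is Birkhoff orthogonal to $u$; such a mutually orthogonal pair exists in every two-dimensional normed space, for instance via a maximal-area inscribed parallelogram or Day's theorem. Then consider the pair $x=(u+v)/\|u+v\|$, $y=(u-v)/\|u-v\|$ and compare $\|x\pm y\|$ with $\|u\pm v\|$. The point is to show that one of the two pairs $(u,v)$ or $(x,y)$ has both $\|\cdot\pm\cdot\|\ge\sqrt2$. This follows from the product estimate $\|u+v\|\,\|x+y\|\ge2$ and its analogue for the minus sign, which in turn rely on $\|u\pm v\|\ge1$ (a consequence of the mutual orthogonality).

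The remaining step is the middle inequality $\Cpr_{\lambda,p}(X)\le\Csk_{\lambda,p}(X)$, which is Lemma \ref{lem:prime-section}. Combining the three parts gives \eqref{eq:bounds} and \eqref{eq:boundsinf}.
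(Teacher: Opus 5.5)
Your main argument is correct and is essentially the paper's proof. The middle inequality is Lemma~\ref{lem:prime-section}. The lower bound uses $\lambda(a,b)\ge\min\{a,b\}$ (from monotonicity and $\lambda(c,c)=c$) together with the classical $J(X)\ge\sqrt2$. The upper bound uses the triangle inequality and $r+s\le2^{1-1/p}$. Treating $p=1$ separately is a small improvement, since Lemma~\ref{lem:blue} is only stated for $1<p<\infty$, although the paper invokes it for $p=1$ as well. You also do not need an exact pair with $\min\{\|u+v\|,\|u-v\|\}\ge\sqrt2$: taking the supremum of $2^{-1/p}\min\{\|u+v\|,\|u-v\|\}$ over $u,v$ already gives $2^{-1/p}J(X)$, as in the paper.

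The optional self-contained proof of $J(X)\ge\sqrt2$ that you sketch, however, does not work. Mutual Birkhoff orthogonality is the wrong ingredient, and with two different normalizations neither the product estimate nor the dichotomy holds. Here is a counterexample. In $\R^2$ let the unit ball be the convex hull of $\pm(1,0)$, $\pm(0,1)$, $\pm(1,\tfrac15)$, $\pm(\tfrac15,1)$, $\pm(\tfrac34,\tfrac34)$, and take $u=e_1$, $v=e_2$. The coordinate functionals have norm one, so $u$ and $v$ are mutually Birkhoff orthogonal. But $\|u+v\|=\tfrac43$ and $\|u-v\|=2$, hence $x=(\tfrac34,\tfrac34)$, $y=(\tfrac12,-\tfrac12)$ and $\|x+y\|=\|x-y\|=\tfrac54$. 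So $\|u+v\|\,\|x+y\|=\tfrac53<2$, and both pairs have minimum below $\sqrt2$. Even granting both product estimates, nothing would control $\|x-y\|$ when $\|u+v\|<\sqrt2\le\|u-v\|$.

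The classical argument uses isosceles orthogonality instead. In a two-dimensional $E$, fix $u\in S_E$ and let $v$ run along an arc of $S_E$ from $u$ to $-u$. The continuous function $\|u+v\|-\|u-v\|$ goes from $2$ to $-2$, so some $v$ gives $\|u+v\|=\|u-v\|=a$. With the common normalization $x=(u+v)/a$, $y=(u-v)/a$ one gets $x+y=2u/a$ and $x-y=2v/a$. Hence $\|x\pm y\|=2/a$ and $\max\{a,2/a\}\ge\sqrt2$. This is the same trick the paper uses in the proof of Theorem~\ref{thm:converse}.
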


\begin{proof}
	The middle inequality is Lemma \ref{lem:prime-section}.  For the lower bound, since $\lambda$ is non-decreasing and $\lambda(a,a)=a$ by homogeneity and normalization,
	\begin{align*}
		\Cpr_{\lambda,p}(X)
		&\ge 2^{-1/p}\sup_{u,v\in S_X}\lambda\bigl(\min\{\|u+v\|,\|u-v\|\},\min\{\|u+v\|,\|u-v\|\}\bigr)\\
		&=2^{-1/p}J(X).
	\end{align*}
	The classical lower bound $J(X)\ge\sqrt2$ gives the first inequality.  For $p=\infty$, the factor $2^{-1/p}$ is one.
	
	For the upper bound, let $u,v\in S_X$ and $r,s\ge0$ with $\|(r,s)\|_p=1$.  By the triangle inequality,
	\[
	\|ru+sv\|\le r+s,
	\qquad
	\|su-rv\|\le r+s.
	\]
	Since $\lambda$ is non-decreasing and $\lambda(r+s,r+s)=r+s$,
	\[
	\lambda(\|ru+sv\|,\|su-rv\|)\le r+s.
	\]
	Lemma \ref{lem:blue} gives $r+s\le2^{1-1/p}(r^p+s^p)^{1/p}=2^{1-1/p}$.  If $p=\infty$, the same estimate is $r+s\le2$.
\end{proof}

\begin{remark}
	The monotonicity assumption cannot simply be deleted.  Homogeneity and normalization alone do not imply that $\lambda(a,b)$ dominates $\min\{a,b\}$ or is dominated by $\max\{a,b\}$ on the square $[0,2]^2$.  Therefore any universal comparison with $J(X)$ must impose at least an order condition or an equivalent lower envelope condition.
\end{remark}

\begin{proposition}\label{prop:JZ}
	Let $\lambda\in\Lambda_m$ and $1\le p\le\infty$.  Then
	\begin{equation}\label{eq:Jlower}
		J(X)\le 2^{1/p}\Csk_{\lambda,p}(X),
	\end{equation}
	where $2^{1/\infty}=1$.  In particular, if $\Csk_{\lambda,p}(X)<2^{1-1/p}$ for some $p\in[1,\infty]$, then $J(X)<2$.
\end{proposition}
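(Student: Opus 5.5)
The plan is to reuse the lower-bound computation from the proof of Theorem \ref{thm:bounds} and then apply Lemma \ref{lem:prime-section}. The first step is the key comparison: for $\lambda\in\Lambda_m$ and any $u,v\in S_X$, put $m=\min\{\|u+v\|,\|u-v\|\}$. Monotonicity of $\lambda$ in each variable, together with $\lambda(a,a)=a\lambda(1,1)=a$ (homogeneity and normalization), gives
\[
m=\lambda(m,m)\le\lambda(\|u+v\|,\|u-v\|).
\]
Dividing by $2^{1/p}$ and taking the supremum over $u,v\in S_X$ yields $2^{-1/p}J(X)\le\Cpr_{\lambda,p}(X)$, with the convention $2^{1/\infty}=1$.

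The second step chains this with Lemma \ref{lem:prime-section}, which holds for all $p\in[1,\infty]$:
\[
J(X)\le 2^{1/p}\Cpr_{\lambda,p}(X)\le 2^{1/p}\Csk_{\lambda,p}(X).
\]
This is exactly \eqref{eq:Jlower}. The case $p=\infty$ needs no separate treatment, because Lemma \ref{lem:prime-section} takes $r=s=1$ there and the factor is $1$.

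For the ``in particular'' clause, assume $\Csk_{\lambda,p}(X)<2^{1-1/p}$. Multiplying by $2^{1/p}$ gives $J(X)<2^{1/p}\cdot2^{1-1/p}=2$; for $p=\infty$ the hypothesis reads $\Csk_{\lambda,\infty}(X)<2$ and gives the same conclusion. By James's characterization recalled in the introduction, $X$ is then uniformly non-square.

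There is no real obstacle here. The only point needing care is that $\lambda(a,a)=a$ uses only homogeneity and $\lambda(1,1)=1$, while the inequality $\lambda(m,m)\le\lambda(a,b)$ for $m=\min\{a,b\}$ uses monotonicity in each variable separately, applied one coordinate at a time. This is why the hypothesis $\lambda\in\Lambda_m$ is needed.
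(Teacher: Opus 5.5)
Your proof is correct and follows the paper's argument: the paper also takes the monotonicity bound $\lambda(\|u+v\|,\|u-v\|)\ge\min\{\|u+v\|,\|u-v\|\}$ from the proof of Theorem~\ref{thm:bounds} to get $2^{-1/p}J(X)\le\Cpr_{\lambda,p}(X)$, and then applies Lemma~\ref{lem:prime-section} to obtain $\Cpr_{\lambda,p}(X)\le\Csk_{\lambda,p}(X)$. The strict-inequality consequence then follows immediately by multiplying through by $2^{1/p}$, as you note.
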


\begin{proof}
	The first assertion follows from the proof of Theorem \ref{thm:bounds}:
	$2^{-1/p}J(X)\le\Cpr_{\lambda,p}(X)\le\Csk_{\lambda,p}(X)$.  The final implication is immediate.
\end{proof}

\begin{proposition}\label{prop:pdep}
	Let $\lambda\in\Lambda_m$ and $1\le p<q\le\infty$.  Then
	\begin{equation}\label{eq:pdep}
		\Csk_{\lambda,q}(X)\le 2^{1/p-1/q}\Csk_{\lambda,p}(X).
	\end{equation}
	Moreover,
	\begin{equation}\label{eq:primepdep}
		\Cpr_{\lambda,q}(X)=2^{1/p-1/q}\Cpr_{\lambda,p}(X).
	\end{equation}
\end{proposition}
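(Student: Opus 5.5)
The primed identity \eqref{eq:primepdep} is immediate from \eqref{eq:defCp}. The quantity $\sup_{u,v\in S_X}\lambda(\|u+v\|,\|u-v\|)$ does not depend on $p$; call it $M$. Then $\Cpr_{\lambda,p}(X)=2^{-1/p}M$ and $\Cpr_{\lambda,q}(X)=2^{-1/q}M$, with the convention $2^{1/\infty}=1$. Dividing the two expressions gives $\Cpr_{\lambda,q}(X)=2^{1/p-1/q}\Cpr_{\lambda,p}(X)$. I will state this first, since it needs neither monotonicity nor anything else.

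For \eqref{eq:pdep} I will compare admissible coefficient pairs directly. Fix $u,v\in S_X$ and $r,s\ge0$ with $\|(r,s)\|_q=1$, and set $\rho=\|(r,s)\|_p$. Then $(r/\rho,s/\rho)$ is admissible for the $p$-constant. By homogeneity of degree one,
\[
\lambda(\|ru+sv\|,\|su-rv\|)=\rho\,\lambda\bigl(\|(r/\rho)u+(s/\rho)v\|,\|(s/\rho)u-(r/\rho)v\|\bigr)\le \rho\,\Csk_{\lambda,p}(X).
\]
It then remains to bound $\rho$. The standard comparison of $\ell_p$ norms on $\R^2$ gives $\|(r,s)\|_p\le 2^{1/p-1/q}\|(r,s)\|_q=2^{1/p-1/q}$. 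I will justify this with Hölder's inequality, using exponents $q/p$ and its conjugate applied to $r^p\cdot1+s^p\cdot1$; for $q=\infty$ it is simply $(r^p+s^p)^{1/p}\le 2^{1/p}\max\{r,s\}$. Taking the supremum over $u,v,r,s$ then yields \eqref{eq:pdep}.

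None of the steps looks like a real obstacle. The only points needing care are the degenerate case $q=\infty$, which the convention $2^{1/\infty}=1$ and the max-norm bound handle, and the observation that $\rho>0$ because $(r,s)\ne(0,0)$. Monotonicity of $\lambda$ is not actually used in this argument; homogeneity suffices. I will note this and keep the hypothesis $\lambda\in\Lambda_m$ only because it is in the statement.
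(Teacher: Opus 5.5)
Your proposal is correct and follows essentially the same route as the paper: rescale an $\ell_q$-normalized pair by $\rho=\|(r,s)\|_p$, use homogeneity to bound the expression by $\rho\,\Csk_{\lambda,p}(X)$, apply $\|(r,s)\|_p\le 2^{1/p-1/q}\|(r,s)\|_q$, and read off the primed identity from the factor $2^{-1/p}$ in the definition. Your remark that monotonicity is never used is accurate; the paper's argument likewise relies only on homogeneity and nonnegativity of $\lambda$.
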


\begin{proof}
	Let $r,s\ge0$ with $\|(r,s)\|_q=1$.  Since $\|(r,s)\|_p\le2^{1/p-1/q}\|(r,s)\|_q$, put $\alpha=\|(r,s)\|_p$.  If $\alpha=0$ there is nothing to prove.  Otherwise $(r/\alpha,s/\alpha)$ has $\ell_p$-norm one.  Homogeneity gives
	\[
	\lambda(\|ru+sv\|,\|su-rv\|)
	=\alpha\lambda\left(\left\|\frac r\alpha u+\frac s\alpha v\right\|,
	\left\|\frac s\alpha u-\frac r\alpha v\right\|\right)
	\le 2^{1/p-1/q}\Csk_{\lambda,p}(X).
	\]
	Taking suprema proves \eqref{eq:pdep}.  Formula \eqref{eq:primepdep} follows directly from the factor $2^{-1/p}$ in the definition.
\end{proof}

\begin{proposition}\label{prop:lambdastability}
	Assume $\lambda,\mu\in\Lambda$ satisfy
	\[
	|\lambda(a,b)-\mu(a,b)|\le L\max\{a,b\}\qquad (0\le a,b\le2).
	\]
	Then for every normed space $X$ and every $p\in[1,\infty]$,
	\begin{equation}\label{eq:lambdadist}
		|\Csk_{\lambda,p}(X)-\Csk_{\mu,p}(X)|\le 2L,
	\end{equation}
	with the sharper bound $2^{1-1/p}L$ when $p<\infty$ and $\lambda,\mu$ are tested only on normalized skew pairs.
\end{proposition}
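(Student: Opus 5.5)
The plan is a pointwise comparison of $\lambda$ and $\mu$ on each admissible skew pair, followed by passing to suprema. Fix $u,v\in S_X$ and $r,s\ge0$ with $\|(r,s)\|_p=1$, and put $a=\|ru+sv\|$, $b=\|su-rv\|$. By the triangle inequality $a,b\le r+s$. First I will check that $(a,b)$ lies in the square where the hypothesis is available: for $p<\infty$, Lemma \ref{lem:blue} (or the trivial case $p=1$) gives $r+s\le 2^{1-1/p}\le 2$. For $p=\infty$ one only has $r+s\le 2$. In every case $0\le a,b\le 2$ and $\max\{a,b\}\le r+s$.

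Applying the hypothesis at $(a,b)$ then gives
\[
\lambda(a,b)\le \mu(a,b)+L\max\{a,b\}\le \mu(a,b)+L(r+s).
\]
Here $\mu(a,b)\le\Csk_{\mu,p}(X)$, and $r+s\le 2^{1-1/p}\le 2$ by Lemma \ref{lem:blue} (with $2^{1-1/\infty}=2$). Taking the supremum over all admissible $(u,v,r,s)$ yields
\[
\Csk_{\lambda,p}(X)\le\Csk_{\mu,p}(X)+2^{1-1/p}L\le \Csk_{\mu,p}(X)+2L.
\]
Interchanging the roles of $\lambda$ and $\mu$ gives the reverse inequality, and hence \eqref{eq:lambdadist} together with the sharper constant $2^{1-1/p}L$ for $p<\infty$.

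One point needs care: the constants must be finite for the subtraction to make sense. Continuity of $\lambda$ on the compact square $[0,2]^2$ bounds $\lambda(a,b)$ there. Alternatively, if one constant were infinite, the displayed inequality would force the other to be infinite as well, and the statement would then be read in that convention. I expect no real obstacle. The only delicate step is confirming that every normalized skew pair lands in $[0,2]^2$, including when $p=\infty$, and that the bound $\max\{a,b\}\le r+s$ is what produces the factor $2^{1-1/p}$ rather than merely $2$.
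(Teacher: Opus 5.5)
Your proposal is correct and follows essentially the same route as the paper: bound both skew norms by $r+s\le 2^{1-1/p}\le 2$, apply the hypothesis pointwise on $[0,2]^2$, and pass to suprema. The paper does the last step via $|\sup f-\sup g|\le\sup|f-g|$, while you do it by a one-sided estimate followed by interchanging $\lambda$ and $\mu$. Your additional checks of the case $p=1$ and of the finiteness of the constants are details the paper leaves implicit.
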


\begin{proof}
	For a normalized pair $\|(r,s)\|_p=1$, both norms $\|ru+sv\|$ and $\|su-rv\|$ are bounded by $r+s\le2^{1-1/p}$ if $p<\infty$, and by $2$ if $p=\infty$.  Thus the value of the expression changes by at most the indicated bound.  Taking suprema and using the elementary inequality $|\sup f-\sup g|\le\sup|f-g|$ proves the assertion.
\end{proof}

As shown in \cite{Amini2019}, Amini-Harandi and Rahimi utilized a convexity lemma to simplify geometric constants by restricting the optimization to extreme points of $B_X$.  The skew pair allows a similar result, but one must handle the fact that both expressions contain both variables with opposite roles.

\begin{lemma}\label{lem:skewconvex}
	Let $f:X\to[0,\infty)$ be convex.  Fix $t\in[0,1]$.  If
	$x=\alpha x_1+(1-\alpha)x_2$ and $y=\beta y_1+(1-\beta)y_2$ with $x_i,y_j\in B_X$, then
	\begin{align}\label{eq:skewconvex}
		&f(x+ty)+f(tx-y)\\
		&\qquad\le \max_{1\le i,j\le2}\bigl[f(x_i+ty_j)+f(tx_i-y_j)\bigr].\nonumber
	\end{align}
	The same assertion holds with $x+ty,tx-y$ replaced by $tx+y,x-ty$.
\end{lemma}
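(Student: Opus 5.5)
The plan is to freeze one variable at a time and use the fact that, for fixed $t$, the map $x\mapsto f(x+ty)+f(tx-y)$ is convex in $x$ for fixed $y$, and likewise $y\mapsto f(x+ty)+f(tx-y)$ is convex in $y$ for fixed $x$. Both arguments of $f$ are affine in each variable separately. A convex function composed with an affine map is convex, and a sum of convex functions is convex. So
\[
\Phi(x,y)=f(x+ty)+f(tx-y)
\]
is separately convex in $x$ and in $y$. Note that convexity of $f$ is all we need. The hypothesis $x_i,y_j\in B_X$ plays no role in the inequality itself; it only records where the lemma will later be applied, namely to extreme-point decompositions.

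First fix $y$ and use convexity in the first variable along the segment $x=\alpha x_1+(1-\alpha)x_2$:
\[
\Phi(x,y)\le \alpha\Phi(x_1,y)+(1-\alpha)\Phi(x_2,y)\le \max_{i}\Phi(x_i,y).
\]
Next, for each $i$, fix $x_i$ and use convexity in the second variable along $y=\beta y_1+(1-\beta)y_2$:
\[
\Phi(x_i,y)\le \max_j\Phi(x_i,y_j).
\]
Combining the two steps gives $\Phi(x,y)\le\max_{i,j}\Phi(x_i,y_j)$, which is exactly \eqref{eq:skewconvex}. For the second assertion, apply the same argument to $\Psi(x,y)=f(tx+y)+f(x-ty)$, which is again separately affine-then-convex in each variable.

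The only point needing care is the order of the maxima. We must not try to use joint convexity in $(x,y)$ along the product decomposition, because $(x,y)$ is generally not a convex combination of the four pairs $(x_i,y_j)$ with the weights one would naively want. Doing the two one-variable steps in sequence avoids this issue, since the maximum over $i$ is taken first and then each term is bounded by a maximum over $j$.

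I expect no real obstacle. The main thing to check is that the sign of $-y$ in $tx-y$ does not break convexity. It does not, because $y\mapsto tx-y$ is still affine. The same holds for $x\mapsto tx-y$ when $t\ge0$, and the argument does not even need $t\ge0$.
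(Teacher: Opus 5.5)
Your proof is correct. Two-point convexity in $x$ with $y$ frozen, followed by two-point convexity in $y$ with each $x_i$ frozen, gives $\Phi(x,y)\le\max_{i,j}\Phi(x_i,y_j)$, and the same works for $\Psi$.

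It is essentially the paper's argument. The difference is that the paper does it in one step, and that step is exactly the route your cautionary paragraph says to avoid. That warning is mistaken. Put $\alpha_1=\alpha$, $\alpha_2=1-\alpha$, $\beta_1=\beta$, $\beta_2=1-\beta$. The weights $\alpha_i\beta_j$ are nonnegative, sum to $1$, and satisfy $\sum_{i,j}\alpha_i\beta_j x_i=x$ and $\sum_{i,j}\alpha_i\beta_j y_j=y$. So $(x,y)=\sum_{i,j}\alpha_i\beta_j(x_i,y_j)$ is a genuine convex combination with precisely the naive weights. Moreover, $\Phi$ is jointly convex, because $(x,y)\mapsto x+ty$ and $(x,y)\mapsto tx-y$ are linear on $X\times X$.

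The paper's proof runs as follows. It writes $x+ty=\sum_{i,j}\alpha_i\beta_j(x_i+ty_j)$ and $tx-y=\sum_{i,j}\alpha_i\beta_j(tx_i-y_j)$. It applies Jensen's inequality for $f$ to each, adds the two, and bounds the convex combination $\sum_{i,j}\alpha_i\beta_j\Phi(x_i,y_j)$ by its largest term.

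If you keep the weights instead of passing to a maximum after each stage, your two steps give exactly this intermediate bound $\Phi(x,y)\le\sum_{i,j}\alpha_i\beta_j\Phi(x_i,y_j)$. The only thing your sequential version buys is that it uses only separate convexity of $\Phi$. It would therefore also work for expressions convex in each variable but not jointly. That generality is not needed here, and the one-shot Jensen argument is shorter.
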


\begin{proof}
	By convexity,
	\[
	f(x+ty)\le\sum_{i,j} \alpha_i\beta_j f(x_i+ty_j),
	\]
	where $\alpha_1=\alpha$, $\alpha_2=1-\alpha$, $\beta_1=\beta$, $\beta_2=1-\beta$.  Similarly,
	\[
	f(tx-y)=f\left(\sum_{i,j}\alpha_i\beta_j(tx_i-y_j)\right)
	\le\sum_{i,j}\alpha_i\beta_j f(tx_i-y_j).
	\]
	Adding the two inequalities gives a convex combination of the four displayed sums, and hence it is bounded by their maximum.  The other branch is identical.
\end{proof}

\begin{theorem}\label{thm:extreme}
	Let $X$ be finite-dimensional and let $\lambda\in\Lambda_c$.  Suppose $\lambda(a,b)=g(\mu(a,b))$, where $g$ is increasing and $\mu$ is convex and non-decreasing in both variables.  Then
	\begin{equation}\label{eq:extreme}
		\Csk_{\lambda,p}(X)=
		\max\{E_A,E_B\},
	\end{equation}
	where
	\begin{align*}
		E_A&=\sup_{u,v\in\operatorname{ext}(B_X),\ 0\le t\le1}
		\frac{\lambda(\|u+tv\|,\|tu-v\|)}{\|(1,t)\|_p},\\
		E_B&=\sup_{u,v\in\operatorname{ext}(B_X),\ 0\le t\le1}
		\frac{\lambda(\|tu+v\|,\|u-tv\|)}{\|(1,t)\|_p}.
	\end{align*}
	If $\lambda$ is symmetric then $E_A=E_B$.
\end{theorem}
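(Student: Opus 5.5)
The plan is to reduce to the two branches of Lemma~\ref{lem:reduction} and then, for each fixed $t\in[0,1]$, push the supremum over $u,v\in S_X$ out to extreme points using convexity. By Lemma~\ref{lem:reduction}, $\Csk_{\lambda,p}(X)=\max\{A_{\lambda,p}(X),B_{\lambda,p}(X)\}$. Since $\operatorname{ext}(B_X)\subseteq S_X$, we immediately get $E_A\le A_{\lambda,p}(X)$ and $E_B\le B_{\lambda,p}(X)$. So it suffices to prove the reverse inequalities. Because the factor $\|(1,t)\|_p^{-1}$ does not depend on $u,v$, this comes down to showing, for each fixed $t$,
\[
\sup_{u,v\in S_X}\lambda(\|u+tv\|,\|tu-v\|)\le\sup_{u,v\in\operatorname{ext}(B_X)}\lambda(\|u+tv\|,\|tu-v\|),
\]
together with the analogous statement for the $B$-branch.

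Since $\lambda=g\circ\mu$ with $g$ increasing, maximizing $\lambda$ is the same as maximizing $\mu$ (and suprema correspond, because $g$ is monotone and continuity can be handled by taking limits). I therefore work with
\[
F_t(x,y)=\mu(\|x+ty\|,\|tx-y\|),\qquad (x,y)\in B_X\times B_X.
\]
The key observation is that $F_t$ is jointly convex on $X\times X$. The maps $(x,y)\mapsto\|x+ty\|$ and $(x,y)\mapsto\|tx-y\|$ are convex and nonnegative, and composing a convex function that is non-decreasing in each variable with nonnegative convex functions yields a convex function. This step plays the role of Lemma~\ref{lem:skewconvex}, but in a cleaner, joint form; alternatively, one could apply Lemma~\ref{lem:skewconvex} with $f=\mu$-composites directly.

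Next comes the finite-dimensional argument. The set $B_X\times B_X$ is compact and convex, and its extreme points are exactly $\operatorname{ext}(B_X)\times\operatorname{ext}(B_X)$. The function $F_t$ is continuous and convex, so by the Bauer maximum principle (equivalently, Krein--Milman or Carathéodory's theorem combined with convexity) its maximum over $B_X\times B_X$ is attained at some pair $(u,v)$ with $u,v\in\operatorname{ext}(B_X)$. This gives the chain
\[
\sup_{S_X\times S_X}F_t\le\max_{B_X\times B_X}F_t=\max_{\operatorname{ext}(B_X)^2}F_t .
\]
Applying $g$ and dividing by $\|(1,t)\|_p$, then taking the supremum over $t$, yields $A_{\lambda,p}(X)\le E_A$. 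The $B$-branch is handled identically, using $(x,y)\mapsto\mu(\|tx+y\|,\|x-ty\|)$.

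For symmetric $\lambda$, I swap $u$ and $v$ as in the proof of Lemma~\ref{lem:reduction}; the set $\operatorname{ext}(B_X)$ is invariant under this relabelling, so $E_A=E_B$. The step I expect to need the most care is the joint convexity and monotone-composition argument, including the passage through $g$. Here I would first note that $\mu$ only needs to be non-decreasing on $[0,\infty)^2$, which is exactly where the norms take their values. I would also note that if $g$ is merely increasing and not continuous, the inequality $\sup g\circ\mu\le g(\max\mu)$ still holds because the maximum of $\mu$ is attained.
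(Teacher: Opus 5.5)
Your proposal is correct and follows essentially the same route as the paper. You reduce to the branch $A_{\lambda,p}$ via Lemma~\ref{lem:reduction}, use that $(x,y)\mapsto\mu(\|x+ty\|,\|tx-y\|)$ is convex (convex norms composed with a non-decreasing convex $\mu$), push the supremum to pairs of extreme points of the finite-dimensional ball, and then apply the increasing $g$. Your joint-convexity plus Bauer/Minkowski formulation is a cleaner execution of the paper's sketch, which uses two-point averaging, approximation by convex combinations and a limit passage; since Minkowski's theorem gives exact finite convex combinations of extreme points, your version needs no limit step at all.
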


\begin{proof}
	By Lemma \ref{lem:reduction}, it suffices to treat $A_{\lambda,p}$.  Let $u,v\in S_X$.  In a finite-dimensional space, $B_X$ is compact and every point of $B_X$ is in the closed convex hull of its extreme points.  Approximate $u$ and $v$ by finite convex combinations of extreme points and apply Lemma \ref{lem:skewconvex} first to the convex function $z\mapsto\|z\|$ when $\mu$ is separately convex through its non-decreasing variables, or directly to $z\mapsto\mu(\|z\|,\|w\|)$.  More explicitly, for fixed $t$ the mapping
	\[
	(x,y)\mapsto \mu(\|x+ty\|,\|tx-y\|)
	\]
	is convex along two-point decompositions by the same averaging argument as in Lemma \ref{lem:skewconvex} and the monotonicity of $\mu$.  Applying the increasing function $g$ preserves the inequality.  Thus the value at $(u,v)$ is bounded by the maximum of values at pairs of extreme points.  Passing to limits gives the formula.  The proof for $E_B$ is the same.  Symmetry follows as in Lemma \ref{lem:reduction}.
\end{proof}

\begin{remark}
	The theorem is useful in polygonal planes.  It reduces the computation of the skew constants to finitely many edge or vertex cases, followed by a one-variable optimization in $t$.  This is exactly how the $\ell_\infty-\ell_1$ example was handled for the symmetric constant in the 2019 paper, but the skew branches require separate bookkeeping.
\end{remark}

\section{Hilbert Space Characterization and Geometric Applications}\label{sec:Hilbert}
We now compute the constant in real Hilbert spaces.  The following elementary lemma is the analytic heart of the argument.

\begin{lemma}\label{lem:radial}
	Let $\lambda\in\Lambda_H$.  For fixed $R>0$ and $0\le a\le R$, define
	\[
	\Phi(a)=\lambda\bigl(\sqrt{R+a},\sqrt{R-a}\bigr).
	\]
	Then $\Phi$ attains its maximum at $a=0$; hence
	\begin{equation}\label{eq:radialmax}
		\lambda\bigl(\sqrt{R+a},\sqrt{R-a}\bigr)\le \sqrt R.
	\end{equation}
\end{lemma}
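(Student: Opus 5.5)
The plan is to show that $\Phi$ is non-increasing on $[0,R]$. Then $\Phi(a)\le\Phi(0)=\lambda(\sqrt R,\sqrt R)=\sqrt R$, the last equality coming from homogeneity and $\lambda(1,1)=1$; this is exactly \eqref{eq:radialmax}.

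First I will work on the open interval $0<a<R$. There $x(a)=\sqrt{R+a}$ and $y(a)=\sqrt{R-a}$ are smooth and positive, with $x'=1/(2x)$ and $y'=-1/(2y)$. Since $\lambda\in C^1$, the chain rule gives
\[
\Phi'(a)=\frac{1}{2x}\frac{\partial\lambda}{\partial x}(x,y)-\frac{1}{2y}\frac{\partial\lambda}{\partial y}(x,y)
=\frac{1}{2xy}\Bigl(y\frac{\partial\lambda}{\partial x}(x,y)-x\frac{\partial\lambda}{\partial y}(x,y)\Bigr).
\]
So $\Phi'(a)\le 0$ is equivalent to a balance inequality of the type \eqref{eq:Hbalance} at the point $(x,y)$. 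Granting it, $\Phi$ is non-increasing on $(0,R)$ by the mean value theorem. It extends to the closed interval $[0,R]$ by continuity of $\lambda$, which also handles the endpoint $a=R$, where $y=0$ and the derivative may blow up.

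The main obstacle is the orientation of the arguments. Along the curve, $x=\sqrt{R+a}\ge y=\sqrt{R-a}$, whereas \eqref{eq:Hbalance} is stated only for $0<x\le y$. I will first handle this using the symmetry of $\lambda$ when it is available: for $\lambda\in\Lambda_s$, swapping the two variables converts the inequality at $(x,y)$ with $x\ge y$ into \eqref{eq:Hbalance} at $(y,x)$, since $\partial_x\lambda(x,y)=\partial_y\lambda(y,x)$.

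Without symmetry, I expect the argument to require the balance inequality on the region actually traversed, namely first coordinate at least the second. I would check that this is the intended reading by testing the stated examples:
\begin{itemize}
\item For $\lambda=\min$, at points with $x>y$ we have $\partial_x\lambda=0$, so the required inequality holds trivially.
\item For the power means with $p>2$, the inequality $yx^{p-1}\le xy^{p-1}$ fails when $x>y$. Indeed, at $a=R$ the value $2^{1/2-1/p}\sqrt R$ exceeds $\sqrt R$, so \eqref{eq:radialmax} itself fails.
\end{itemize}
Thus the lemma holds precisely when the balance inequality is assumed in the orientation matching $\Phi$. I would therefore state explicitly that the balance condition is used in that orientation (automatic for symmetric $\lambda$ by the swap above), and flag the power-mean example as needing $p\le 2$ rather than $p\ge2$.
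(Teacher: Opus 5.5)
Your computation of $\Phi'$ is the same as the paper's, and the orientation problem you raise is real. The paper's proof records $0<y\le x$ and then invokes \eqref{eq:Hbalance}, which is only assumed on $0<x\le y$, so it has exactly the gap you found. Your counterexample is correct and shows the lemma is false as stated. For $p>2$, $\lambda(a,b)=2^{-1/p}(a^p+b^p)^{1/p}$ lies in $\Lambda_m$ and satisfies \eqref{eq:Hbalance} on $0<x\le y$ (the condition reduces to $x^{p-2}\le y^{p-2}$). Yet $\Phi(R)=2^{1/2-1/p}\sqrt R>\Phi(0)$.

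What fails is your rescue for symmetric $\lambda$. Write $\lambda_1,\lambda_2$ for the partial derivatives in the first and second slot. Symmetry gives $\lambda_1(x,y)=\lambda_2(y,x)$ and $\lambda_2(x,y)=\lambda_1(y,x)$. So the inequality you need at a point with $x\ge y$,
\[
y\,\lambda_1(x,y)\le x\,\lambda_2(x,y),
\]
becomes, at $(x',y')=(y,x)$ with $x'\le y'$,
\[
x'\,\lambda_2(x',y')\le y'\,\lambda_1(x',y'),
\]
which is the \emph{reverse} of \eqref{eq:Hbalance}. Hence for symmetric $\lambda$, condition \eqref{eq:Hbalance} yields $\Phi'\ge0$. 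Then $\Phi$ is non-decreasing with its maximum at $a=R$, the opposite of the claim. Your own counterexample is symmetric, which should have flagged this, so the phrase ``automatic for symmetric $\lambda$'' must go.

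The only valid repair is to assume $y\lambda_1\le x\lambda_2$ on $\{0<y\le x\}$; for symmetric $\lambda$ this is equivalent to $y\lambda_1\ge x\lambda_2$ on $\{0<x\le y\}$. Under that hypothesis your argument, including the continuity step at $a=R$, is complete. With the corrected hypothesis, $\min$ and $\sqrt{ab}$ qualify, although neither satisfies \eqref{eq:Hbalance} as written (for $\sqrt{ab}$ it would require $y^2\le x^2$ when $x<y$). The power means qualify exactly when $p\le2$.

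One small overstatement remains. The reoriented condition is equivalent to $\Phi$ being non-increasing for every $R$. By homogeneity, however, the lemma itself only needs $\lambda(\sqrt{1+b},\sqrt{1-b})\le1$ for $0\le b\le1$. So the condition is sufficient, not ``precisely'' what is needed.
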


\begin{proof}
	For $0<a<R$, put $x=\sqrt{R+a}$ and $y=\sqrt{R-a}$, so $0<y\le x$.  Differentiating gives
	\[
	\Phi'(a)=\frac{1}{2x}\lambda_x(x,y)-\frac{1}{2y}\lambda_y(x,y)
	=\frac{y\lambda_x(x,y)-x\lambda_y(x,y)}{2xy}.
	\]
	The balance condition \eqref{eq:Hbalance} says that the numerator is non-positive.  Hence $\Phi$ is non-increasing on $[0,R]$, and its maximum is $\Phi(0)=\lambda(\sqrt R,\sqrt R)=\sqrt R$.
\end{proof}
\begin{lemma}\label{lem:white}
	For $1\le p\le\infty$,
	\[        
	\sup_{r,s\ge0,\ \|(r,s)\|_p=1}(r^2+s^2)^{1/2}        
	=        
	\begin{cases}        
		1, & 1\le p\le2,\\        
		2^{1/2-1/p}, & 2\le p\le\infty.        
	\end{cases}
	\]
	For $p>2$ the maximum is attained exactly at $r=s=2^{-1/p}$, up to the trivial symmetry; for $1<p<2$ it is attained exactly at $(1,0)$ and $(0,1)$.
\end{lemma}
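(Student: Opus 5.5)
The statement is a comparison between the $\ell_2$ and $\ell_p$ norms of a nonnegative vector $(r,s)$ on the $\ell_p$ unit sphere, so I will split at $p=2$ and use monotonicity of $\ell_p$ norms in one direction and the power-mean (Hölder) inequality in the other. Throughout put $K_p=\{(r,s):r,s\ge0,\ \|(r,s)\|_p=1\}$. This set is compact and contains $(1,0)$, $(0,1)$ and the diagonal point $(2^{-1/p},2^{-1/p})$ (for $p=\infty$ the latter is $(1,1)$). Hence the supremum is a maximum, and it is enough to give an upper bound together with a point where the bound is attained.

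\emph{Case $1\le p\le 2$.} Since $\ell_p$ norms decrease in $p$, we have $\|(r,s)\|_2\le\|(r,s)\|_p=1$, with equality at $(1,0)$. This gives the value $1$. For the equality cases when $1<p<2$, normalize so that $r^p+s^p=1$ with $r,s\in[0,1]$. Then $r^2\le r^p$ and $s^2\le s^p$, and the inequality is strict whenever the number lies in $(0,1)$. So $r^2+s^2=1$ forces each of $r,s$ to be $0$ or $1$, which together with $r^p+s^p=1$ leaves only $(1,0)$ and $(0,1)$.

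\emph{Case $2\le p<\infty$.} Apply the power-mean inequality $\bigl(\tfrac{r^2+s^2}{2}\bigr)^{1/2}\le\bigl(\tfrac{r^p+s^p}{2}\bigr)^{1/p}=2^{-1/p}$. Equivalently, use Hölder with exponents $p/2$ and its conjugate, as in Lemma \ref{lem:blue}. This yields $(r^2+s^2)^{1/2}\le 2^{1/2-1/p}$. Equality holds at $r=s=2^{-1/p}$, since then $(r^2+s^2)^{1/2}=\sqrt2\,2^{-1/p}$. For $p>2$ the power-mean inequality is strict unless $r^p=s^p$, exactly as in the equality clause of Lemma \ref{lem:blue}, so the maximizer is unique. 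At $p=2$ both formulas give $1$, so the two cases agree there.

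\emph{Case $p=\infty$.} Here $r,s\le1$, so $r^2+s^2\le2$, with equality only at $r=s=1$. This matches $2^{1/2-1/\infty}=\sqrt2$.

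There is no real obstacle in this argument. The only point needing care is the strictness bookkeeping in the equality clauses: strictness of $r^2<r^p$ on $(0,1)$ when $p<2$, and the strict power-mean inequality when $p>2$. I note that $p=1$ is excluded from the uniqueness claim, and rightly so, because there $(1,0)$ and $(0,1)$ are still the only maximizers, but the case is stated only for $1<p<2$.
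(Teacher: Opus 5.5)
Your proof is correct and follows the same route as the paper. You split at $p=2$, using $\|(r,s)\|_2\le\|(r,s)\|_p$ for $p\le2$ and the power-mean (H\"older) comparison $\|(r,s)\|_2\le 2^{1/2-1/p}\|(r,s)\|_p$ for $p\ge2$, and you also supply the strictness arguments for the equality clauses ($r^2<r^p$ on $(0,1)$ when $p<2$, and the strict power-mean inequality when $p>2$), which the paper only asserts.
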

\begin{proof}
	This is the standard comparison of finite-dimensional $\ell_p$ norms.  If $p\ge2$, then
	\[        
	\|(r,s)\|_2\le 2^{1/2-1/p}\|(r,s)\|_p,
	\]
	with equality at equal coordinates.  If $p\le2$, then $\|(r,s)\|_2\le\|(r,s)\|_p$, with equality at coordinate vectors when $p<2$.
\end{proof}
\begin{theorem}\label{thm:hilbertvalue}
	Let $H$ be a real Hilbert space with $\dim H\ge2$, let $\lambda\in\Lambda_H$, and let $2\le p\le\infty$.  Then
	\begin{equation}\label{eq:hilbertvalue}
		\Csk_{\lambda,p}(H)=\Cpr_{\lambda,p}(H)=2^{1/2-1/p}.
	\end{equation}
	For $1\le p\le2$, the corresponding value of $\Csk_{\lambda,p}(H)$ is $1$.
\end{theorem}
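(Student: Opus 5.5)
The plan is to obtain a pointwise upper bound from the Hilbert-space identity together with Lemma \ref{lem:radial} and Lemma \ref{lem:white}, and to take the matching lower bounds from Theorem \ref{thm:bounds} (for $p\ge2$) and from an explicit choice of coefficients (for $p\le2$).

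\textbf{Upper bound.} Fix $u,v\in S_H$ and $r,s\ge0$ with $\|(r,s)\|_p=1$. Expanding the inner products gives
\[
\|ru+sv\|^2=r^2+s^2+2rs\langle u,v\rangle,\qquad \|su-rv\|^2=r^2+s^2-2rs\langle u,v\rangle .
\]
The cross terms cancel, so this holds for arbitrary unit vectors, not only orthogonal ones. Put $R=r^2+s^2$ and $a=2rs\langle u,v\rangle$. Then $|a|\le 2rs\le R$, and the two norms are exactly $\sqrt{R+a}$ and $\sqrt{R-a}$.

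For $a\ge0$ I will apply Lemma \ref{lem:radial} directly, which gives
\[
\lambda(\|ru+sv\|,\|su-rv\|)\le\sqrt R=\|(r,s)\|_2 .
\]
Lemma \ref{lem:white} then bounds $\|(r,s)\|_2$ by $2^{1/2-1/p}$ when $p\ge2$ and by $1$ when $p\le2$. Taking the supremum yields $\Csk_{\lambda,p}(H)\le 2^{1/2-1/p}$ for $p\ge 2$ and $\Csk_{\lambda,p}(H)\le 1$ for $p\le 2$.

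\textbf{Lower bounds.} For $2\le p\le\infty$, Theorem \ref{thm:bounds} gives
\[
2^{1/2-1/p}\le\Cpr_{\lambda,p}(H)\le\Csk_{\lambda,p}(H).
\]
Combined with the upper bound, this forces $\Cpr_{\lambda,p}(H)=\Csk_{\lambda,p}(H)=2^{1/2-1/p}$. As a sanity check, orthonormal $u,v$ give $\|u\pm v\|=\sqrt2$, so $\lambda(\sqrt2,\sqrt2)/2^{1/p}=2^{1/2-1/p}$. For $1\le p\le2$, I take $r=1$, $s=0$. Then the skew pair is $(u,-v)$ and $\lambda(\|u\|,\|v\|)=\lambda(1,1)=1$, so $\Csk_{\lambda,p}(H)\ge1$, which matches the upper bound.

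\textbf{Main obstacle: the case $a<0$.} Here $\sqrt{R+a}<\sqrt{R-a}$, so the first argument of $\lambda$ is the smaller one. As stated, the balance condition \eqref{eq:Hbalance} constrains $\lambda$ in the region $x\le y$, whereas the proof of Lemma \ref{lem:radial} is written with $y\le x$. I need to check carefully which sign of $a$ the lemma actually covers. I will then run the same derivative computation for $\Psi(b)=\lambda(\sqrt{R-b},\sqrt{R+b})$ on $0\le b\le R$. Its derivative has numerator $x\lambda_y-y\lambda_x$ with $x=\sqrt{R-b}\le y=\sqrt{R+b}$, and \eqref{eq:Hbalance} makes this numerator nonnegative. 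That is the wrong direction, so this route would fail.

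So the first thing I would try is to avoid needing the other region at all. The idea is to use the freedom in $u,v$: replacing $v$ by $-v$ changes the sign of $a$. However, it also turns the pair into $(ru-sv,\,su+rv)$, which is not of the admissible skew form. If that does not work, the fallback is to read Lemma \ref{lem:radial} as covering the region in which \eqref{eq:Hbalance} genuinely applies, and to require the balance inequality on both sides of the diagonal; this is automatic when $\lambda$ is symmetric. I expect reconciling this sign issue to be the only delicate point; the remaining steps are routine.
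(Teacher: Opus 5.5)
Your outline follows the paper's proof step for step: the expansions of $\|ru+sv\|^2$ and $\|su-rv\|^2$, Lemma~\ref{lem:radial}, Lemma~\ref{lem:white}, orthogonal vectors or Theorem~\ref{thm:bounds} for the lower bound, and $(r,s)=(1,0)$ for $p\le2$. The obstacle you isolate at $a<0$ is a genuine gap, and the paper does not close it either. Its proof says ``after replacing $c$ by $|c|$ if necessary''. But in $H$, for fixed $r,s$, the admissible values are exactly $\lambda(\sqrt{R+a},\sqrt{R-a})$ with $|a|\le 2rs$. The values with $a<0$ are the argument-swapped ones, $\lambda(\sqrt{R-|a|},\sqrt{R+|a|})$, and trading them for $\lambda(\sqrt{R+|a|},\sqrt{R-|a|})$ is legitimate only when $\lambda$ is symmetric. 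Your reading of the region is also correct. The proof of Lemma~\ref{lem:radial} uses \eqref{eq:Hbalance} on $0<y\le x$, which is the reading under which $\min\{a,b\}$ and $\sqrt{ab}$ belong to $\Lambda_H$. With the region as printed, the lemma fails even for $a\ge0$: $\lambda_q$ with $q>2$ satisfies the printed inequality, yet $\Phi(R)=2^{1/2-1/q}\sqrt R>\sqrt R$.

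The missing case cannot be supplied, because the statement is false without symmetry. Take $\lambda(a,b)=b$. It is linear, hence $C^1$, lies in $\Lambda_m$, and satisfies $y\lambda_x=0\le x=x\lambda_y$ on both sides of the diagonal, so it is in $\Lambda_H$ under either reading. With $v=-u$ and $r=s=2^{-1/p}$ you get $\|su-rv\|=r+s=2^{1-1/p}$. Hence $\Csk_{\lambda,p}(H)=\Cpr_{\lambda,p}(H)=2^{1-1/p}$, which exceeds $2^{1/2-1/p}$ for every $p\ge2$ and exceeds $1$ for $1<p\le2$.

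The failure is not an artifact of degeneracy. Take $\lambda(a,b)=a^{\theta}b^{1-\theta}$ with $0<\theta<1/2$: it is strictly increasing and satisfies the balance inequality on $0<y\le x$. Yet $a\mapsto\lambda(\sqrt{R+a},\sqrt{R-a})$ peaks at $a=(2\theta-1)R<0$, with value $\sqrt{2R}\,\bigl(\theta^{\theta}(1-\theta)^{1-\theta}\bigr)^{1/2}>\sqrt R$. This value is reachable in $H$ with $r=s$ and $\langle u,v\rangle=2\theta-1$.

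So your fallback is a necessary correction of the hypothesis, not a convenience. You must assume either that $\lambda$ is symmetric, or additionally that $y\lambda_x\ge x\lambda_y$ on $0<x\le y$, so that $\Phi$ is also non-decreasing on $[-R,0]$. Under that hypothesis your argument is complete and yields exactly the stated values for both $\Csk_{\lambda,p}$ and $\Cpr_{\lambda,p}$.
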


\begin{proof}
	Let $u,v\in S_H$ and $r,s\ge0$.  Put $c=\langle u,v\rangle$.  Then
	\begin{align*}
		\|ru+sv\|^2&=r^2+s^2+2rsc,\\
		\|su-rv\|^2&=r^2+s^2-2rsc.
	\end{align*}
	Applying Lemma \ref{lem:radial} with $R=r^2+s^2$ and $a=2rsc$ after replacing $c$ by $|c|$ if necessary gives
	\[
	\lambda(\|ru+sv\|,\|su-rv\|)\le (r^2+s^2)^{1/2}.
	\]
	Thus
	\[
	\Csk_{\lambda,p}(H)\le \sup_{r,s\ge0,\ \|(r,s)\|_p=1}(r^2+s^2)^{1/2}.
	\]
	By Lemma \ref{lem:white} ,the last supremum equals $2^{1/2-1/p}$ for $p\ge2$ and equals $1$ for $1\le p\le2$.  The lower bound is obtained by taking orthogonal $u,v$ and, for $p\ge2$, $r=s=2^{-1/p}$; for $p\le2$, take $(r,s)=(1,0)$.  This proves the assertion for $\Csk$.
	
	For $\Cpr$, the same computation with $r=s=2^{-1/p}$ gives
	\[
	2^{-1/p}\lambda(\|u+v\|,\|u-v\|)
	\le2^{-1/p}\sqrt2=2^{1/2-1/p},
	\]
	and equality is obtained when $u\perp v$.
\end{proof}

\begin{theorem}\label{thm:converse}
	Let $X$ be a real normed space with $\dim X\ge3$, let $2\le p<\infty$, and let $\lambda\in\Lambda_m$.  If
	\begin{equation}\label{eq:mincondition}
		\Csk_{\lambda,p}(X)=2^{1/2-1/p},
	\end{equation}
	then $X$ is an inner product space.
	Consequently, if $\lambda\in\Lambda_H$, then for every real normed space $X$ with $\dim X\ge3$,
	\[
	X\text{ is Hilbert }\quad\Longleftrightarrow\quad
	\Csk_{\lambda,p}(X)=2^{1/2-1/p}\quad(2\le p<\infty).
	\]
\end{theorem}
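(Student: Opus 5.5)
The plan is to pass from the skew constant to a statement about the James-type quantity $\min\{\|u+v\|,\|u-v\|\}$ on isosceles orthogonal pairs, and then to invoke a classical characterization of inner product spaces. There are two inputs. The first is the lower envelope $\lambda(a,b)\ge \min\{a,b\}$ for $\lambda\in\Lambda_m$. This holds because $\lambda$ is non-decreasing and $\lambda(c,c)=c$; it is the same observation used in the proof of Theorem \ref{thm:bounds}. The second is Lemma \ref{lem:prime-section} together with the lower-bound chain in Theorem \ref{thm:bounds}:
\[
2^{-1/p}J(X)\le \Cpr_{\lambda,p}(X)\le \Csk_{\lambda,p}(X)=2^{1/2-1/p}.
\]
This yields $J(X)\le\sqrt2$, and since always $J(X)\ge\sqrt2$, we get $J(X)=\sqrt2$. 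Concretely, for every $u,v\in S_X$,
\[
\min\{\|u+v\|,\|u-v\|\}\le\sqrt2 .
\]

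Next I restrict to isosceles orthogonal pairs. In any two-dimensional subspace, continuity gives, for every $u\in S_X$, some $v\in S_X$ with $\|u+v\|=\|u-v\|$; we write $u\perp_I v$. For such a pair the inequality above reads $\|u+v\|\le\sqrt2$. For the reverse inequality I normalize the diagonals. Put $\rho=\|u+v\|=\|u-v\|$, $a=(u+v)/\rho$ and $b=(u-v)/\rho$. Then $a,b\in S_X$ and
\[
\|a+b\|=\|a-b\|=\frac{2}{\rho},
\]
so $a\perp_I b$ as well. Applying the same inequality to $(a,b)$ gives $2/\rho\le\sqrt2$, that is, $\rho\ge\sqrt2$. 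Hence $\|u+v\|=\|u-v\|=\sqrt2$ for every isosceles orthogonal pair of unit vectors.

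I would then invoke the classical characterization (Day, and Amir's book \emph{Characterizations of Inner Product Spaces}): a real normed space in which $u,v\in S_X$ and $u\perp_I v$ always imply $\|u+v\|=\sqrt2$ is an inner product space. If the precise form available in the literature needs $\dim X\ge3$, or needs only a one-sided inequality, the hypothesis $\dim X\ge3$ in the theorem covers this. Alternatively, one could work with every two-dimensional subspace, use Proposition \ref{prop:finite}, and apply the Jordan--von Neumann criterion subspace by subspace.

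The main obstacle is this last step. It requires pinning down exactly which literature result converts the equality $\|u+v\|=\sqrt2$ on isosceles orthogonal unit pairs into the parallelogram law, and checking its dimensional hypotheses. If that citation proves awkward, I would fall back on the full information in the hypothesis rather than only $r=s$. Taking general $(r,s)$ with $\|(r,s)\|_p=1$ together with the skew structure gives
\[
\min\{\|ru+sv\|,\|su-rv\|\}\le 2^{1/2-1/p}
\]
for all $u,v\in S_X$ and all such $(r,s)$. This is a family of inequalities on each plane, and I would try to show that it forces the unit circle of each plane to be an ellipse.

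For the ``consequently'' part, the direction ``$\Rightarrow$'' is Theorem \ref{thm:hilbertvalue}, since $\Lambda_H\subset\Lambda_m$. The direction ``$\Leftarrow$'' is the first part of the theorem.
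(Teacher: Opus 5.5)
Your argument is essentially the paper's proof. The equal-parameter slice $r=s=2^{-1/p}$ bounds the diagonals of every isosceles unit pair by $\sqrt2$, and the normalized diagonals $(u\pm v)/\rho$ give the reverse bound. The conclusion then rests on an external characterization valid for $\dim X\ge3$; the paper likewise only cites one. The equivalence comes from Theorem \ref{thm:hilbertvalue}.

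Keep only that main route, though. Your plane-by-plane alternative via Proposition \ref{prop:finite} and Jordan--von Neumann cannot work: in dimension two, the condition $\|u+v\|=\sqrt2$ for all isosceles unit pairs also holds in non-Euclidean planes, e.g.\ strictly convex non-Euclidean norms invariant under rotation by $\pi/4$. So the hypothesis $\dim X\ge3$ has to enter through the cited criterion itself.
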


\begin{proof}
	We use the classical Aronszajn--Jordan-von Neumann criterion: if $\dim X\ge3$ and every isosceles pair $u,v\in S_X$ satisfying $\|u+v\|=\|u-v\|$ also satisfies $\|u+v\|=\sqrt2$, then $X$ is an inner product space.
	
	Let $u,v\in S_X$ and suppose
	\[
	\|u+v\|=\|u-v\|=a.
	\]
	Taking $r=s=2^{-1/p}$ in the skew constant gives
	\[
	2^{1/2-1/p}=\Csk_{\lambda,p}(X)
	\ge 2^{-1/p}\lambda(a,a)=2^{-1/p}a.
	\]
	Thus $a\le\sqrt2$.
	
	If $a<\sqrt2$, define
	\[
	x=\frac{u+v}{a},\qquad y=\frac{u-v}{a}.
	\]
	Then $x,y\in S_X$, and
	\[
	x+y=\frac{2u}{a},\qquad x-y=\frac{2v}{a},
	\]
	so $\|x+y\|=\|x-y\|=2/a$.  Applying the previous estimate to the isosceles pair $(x,y)$ gives $2/a\le\sqrt2$, hence $a\ge\sqrt2$, a contradiction.  Therefore $a=\sqrt2$ for every isosceles pair.  By the cited criterion, $X$ is Hilbert.
	
	The final equivalence follows from Theorem \ref{thm:hilbertvalue} for the forward implication and the first part for the converse.
\end{proof}

\begin{remark}
	The restriction $\dim X\ge3$ is standard and cannot be dropped in this form.  There are non-Euclidean two-dimensional spaces with special extremal behavior of the James constant equal to $\sqrt2$.  The same obstruction appears for the present family because $\Cpr_{\lambda,p}$ contains the James slice.
\end{remark}

The next theorem shows that the skew constant detects uniform non-squareness exactly in the same spirit as the original family.

\begin{theorem}\label{thm:UNS}
	Let $\lambda\in\Lambda_m$ and $1\le p\le\infty$.  If
	\begin{equation}\label{eq:UNScondition}
		\Csk_{\lambda,p}(X)<2^{1-1/p},
	\end{equation}
	then $X$ is uniformly non-square.  Conversely, if $\lambda\in\Lambda_m$ is strictly increasing in each variable and $X$ is uniformly non-square, then
	\begin{equation}\label{eq:UNSconverse}
		\Csk_{\lambda,p}(X)<2^{1-1/p}
	\end{equation}
	for every $1<p<\infty$.
\end{theorem}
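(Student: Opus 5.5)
The first assertion follows directly from Proposition \ref{prop:JZ}. Since $\lambda\in\Lambda_m$, that result gives $J(X)\le 2^{1/p}\Csk_{\lambda,p}(X)$, where $2^{1/\infty}=1$. Under \eqref{eq:UNScondition} this yields $J(X)<2^{1/p}\cdot 2^{1-1/p}=2$. By James' characterization recalled in the introduction, $J(X)<2$ is equivalent to $X$ being uniformly non-square, so nothing new is needed here.

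For the converse, fix $1<p<\infty$ and argue by contradiction. Suppose $\Csk_{\lambda,p}(X)=2^{1-1/p}$; by Theorem \ref{thm:bounds} this is the only alternative to \eqref{eq:UNSconverse}. Then there are $u_n,v_n\in S_X$ and $r_n,s_n\ge0$ with $r_n^p+s_n^p=1$ such that
\[
\lambda(\|r_nu_n+s_nv_n\|,\|s_nu_n-r_nv_n\|)\to 2^{1-1/p}.
\]
As in the upper-bound proof of Theorem \ref{thm:bounds}, the left side is at most $\lambda(r_n+s_n,r_n+s_n)=r_n+s_n\le 2^{1-1/p}$, so $r_n+s_n\to2^{1-1/p}$. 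Lemma \ref{lem:blue} then gives $r_n,s_n\to 2^{-1/p}=:c$. It follows that $\|r_nu_n+s_nv_n\|-c\|u_n+v_n\|\to0$, and similarly for the second norm, since the differences are bounded by $|r_n-c|+|s_n-c|$. Writing $a_n=\|u_n+v_n\|$ and $b_n=\|u_n-v_n\|$, both in $[0,2]$, continuity and homogeneity of $\lambda$ on the compact set $[0,2]^2$ give $\lambda(a_n,b_n)\to 2$.

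The main step is to conclude from $\lambda(a_n,b_n)\to2$ that $a_n\to2$ and $b_n\to2$. Pass to a subsequence with $(a_n,b_n)\to(a,b)\in[0,2]^2$, so that $\lambda(a,b)=2=\lambda(2,2)$. If, say, $a<2$, then strict monotonicity in the first variable and monotonicity in the second give $\lambda(a,b)<\lambda(2,b)\le\lambda(2,2)=2$, which is a contradiction. The same argument rules out $b<2$, so $a=b=2$. This is exactly where the strict increase of $\lambda$ is used; with mere monotonicity, a function such as $\max\{a,b\}$ would break the argument.

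Consequently $\min\{\|u_n+v_n\|,\|u_n-v_n\|\}\to2$, so $J(X)=2$. This contradicts uniform non-squareness, and therefore \eqref{eq:UNSconverse} holds. The restriction $1<p<\infty$ enters only through Lemma \ref{lem:blue}, which forces $r_n,s_n\to2^{-1/p}$. For $p=1$ or $p=\infty$ there is no unique maximizer of $r+s$ on the $\ell_p$ sphere, which explains why those endpoints are excluded from the converse.
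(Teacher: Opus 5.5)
Your proof is correct and follows the paper's argument. The first part goes through Proposition \ref{prop:JZ}. For the converse, Lemma \ref{lem:blue} forces $r_n,s_n\to2^{-1/p}$, and strict monotonicity then drives both norms to $2$, giving $J(X)=2$; your compactness/continuity step makes rigorous the paper's one-line ``strict monotonicity then forces'' claim. One slip in your closing remark: for $p=\infty$ the maximizer of $r+s$ on $\{\max\{r,s\}=1\}$ is unique ($r=s=1$), so the same argument also works there, and only $p=1$ genuinely fails (there $\Csk_{\lambda,1}(X)=1$ for every $X$).
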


\begin{proof}
	The first implication follows immediately from Proposition \ref{prop:JZ}: $J(X)\le2^{1/p}\Csk_{\lambda,p}(X)<2$, which is equivalent to uniform non-squareness.
	
	For the converse, assume $1<p<\infty$ and suppose that $X$ is uniformly non-square but $\Csk_{\lambda,p}(X)=2^{1-1/p}$.  Choose sequences $u_n,v_n\in S_X$ and $r_n,s_n\ge0$ with $\|(r_n,s_n)\|_p=1$ such that
	\[
	\lambda(\|r_nu_n+s_nv_n\|,\|s_nu_n-r_nv_n\|)\to2^{1-1/p}.
	\]
	By compactness of the scalar interval after passing to a subsequence, $r_n\to r$ and $s_n\to s$ with $r^p+s^p=1$.  The upper bound proof gives
	\[
	\lambda(\|r_nu_n+s_nv_n\|,\|s_nu_n-r_nv_n\|)
	\le r_n+s_n\le2^{1-1/p}.
	\]
	Equality in Holder's inequality is approached only when $r=s=2^{-1/p}$.  Hence $r=s=2^{-1/p}$.  Strict monotonicity then forces
	\[
	\|r_nu_n+s_nv_n\|\to r_n+s_n,
	\qquad
	\|s_nu_n-r_nv_n\|\to r_n+s_n.
	\]
	After dividing by $r_n$ and using $s_n/r_n\to1$, we obtain
	\[
	\|u_n+v_n\|\to2,
	\qquad
	\|u_n-v_n\|\to2.
	\]
	Thus $J(X)=2$, contradicting uniform non-squareness.  Therefore \eqref{eq:UNSconverse} holds.
\end{proof}

\begin{corollary}
	Let $\lambda\in\Lambda_m$ be strictly increasing and let $1<p<\infty$.  If $\Csk_{\lambda,p}(X)<2^{1-1/p}$, then $X$ is reflexive whenever it is finitely representable in a uniformly non-square space.  In particular every superreflexive space satisfying the strict bound has equivalent uniformly non-square renormings detected by the skew constant.
\end{corollary}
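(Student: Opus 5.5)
The plan is to reduce everything to Theorem \ref{thm:UNS} together with two classical facts: James's theorem that uniformly non-square Banach spaces are reflexive, and Enflo's renorming theorem for superreflexive spaces.

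First, assume $\Csk_{\lambda,p}(X)<2^{1-1/p}$ with $\lambda\in\Lambda_m$ strictly increasing and $1<p<\infty$. The first part of Theorem \ref{thm:UNS} (via Proposition \ref{prop:JZ}) gives $J(X)\le 2^{1/p}\Csk_{\lambda,p}(X)<2$, so $X$ is uniformly non-square. I would then show that uniform non-squareness passes to every space $Y$ finitely representable in $X$. The James constant is a supremum over pairs $u,v\in S_Y$, so it depends only on two-dimensional subspaces, exactly as in Proposition \ref{prop:finite}. If $Y$ is finitely representable in $X$, then each two-dimensional $E\subset Y$ is $(1+\eps)$-isomorphic to some $F\subset X$. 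This should give $J(E)\le(1+\eps)^{2}J(F)+O(\eps)\le(1+\eps)^2J(X)+O(\eps)$, and letting $\eps\to0$ yields $J(Y)\le J(X)<2$. James's theorem then shows that every such $Y$ (assumed complete), and in particular $X$ itself when complete, is reflexive. Hence $X$ is superreflexive, which is how I read the first assertion.

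For the ``in particular'' clause, start from a superreflexive space. By Enflo's theorem (uniformly convex renorming) it admits an equivalent uniformly non-square norm. The converse half of Theorem \ref{thm:UNS} applies precisely because $\lambda$ is strictly increasing and $1<p<\infty$, so that norm satisfies $\Csk_{\lambda,p}<2^{1-1/p}$. Thus the strict bound characterizes, up to renorming, exactly the superreflexive spaces.

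The only step with genuine content is the stability estimate for $J$ under $(1+\eps)$-isomorphisms of two-dimensional subspaces. The plan there is to take $u,v\in S_E$, push them through $T:E\to F$ with $\|T\|\|T^{-1}\|\le1+\eps$, renormalize $Tu/\|Tu\|$ and $Tv/\|Tv\|$, and control the resulting error by the triangle inequality. I expect the main obstacle to be interpretive rather than technical, since the corollary's wording is loose. I would state explicitly that ``reflexive'' is meant for Banach spaces (completions) and that the conclusion is superreflexivity.
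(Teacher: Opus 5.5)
Your proposal is correct and follows the paper's route: the strict bound gives $J(X)<2$ through Theorem \ref{thm:UNS}, James's theorem then gives reflexivity, and the final clause rests on the superreflexive renorming characterization, which you make explicit via Enflo's theorem and the converse half of Theorem \ref{thm:UNS}. You also check that $J$ cannot increase under finite representability, which makes $X$ superreflexive, and you note that reflexivity only makes sense for complete spaces; these refine the paper's three-line proof, which leaves both points implicit, rather than giving a different approach.
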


\begin{proof}
	The strict bound gives uniform non-squareness by Theorem \ref{thm:UNS}.  James proved that uniformly non-square spaces are reflexive.  The final assertion follows from the standard superreflexive renorming characterization.
\end{proof}

For the symmetric Amini-Harandi--Rahimi family, there is an inequality controlling the gap between $C_{\lambda,p}$ and its modified version.  The skew form admits a parallel estimate, but the proof must account for the branch reduction.

\begin{theorem}\label{thm:gap}
	Let $1<p<\infty$ and $q$ be conjugate to $p$.  Let $\lambda\in\Lambda_m$ satisfy the translation condition
	\begin{equation}\label{eq:translation}
		\lambda(a,b)+t\le \lambda(a+t,b+t)
		\qquad(a,b\ge0,\text{ } -\min\{a,b,1\}\le t\le0).
	\end{equation}
	Then
	\begin{equation}\label{eq:gap1}
		\Csk_{\lambda,p}(X)
		\le \left[1+\left(2^{1/p}\Cpr_{\lambda,p}(X)-1\right)^q\right]^{1/q}.
	\end{equation}
	Consequently,
	\begin{equation}\label{eq:gap2}
		\Csk_{\lambda,p}(X)-\Cpr_{\lambda,p}(X)
		\le \left[1+(\sqrt2-1)^q\right]^{1/q}-2^{1/2-1/p}.
	\end{equation}
\end{theorem}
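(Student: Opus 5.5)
The plan is to prove \eqref{eq:gap1} branch by branch through Lemma \ref{lem:reduction}, and then obtain \eqref{eq:gap2} from a one-variable monotonicity argument. Throughout, put $K=2^{1/p}\Cpr_{\lambda,p}(X)-1$. By Theorem \ref{thm:bounds} we have $2^{1/2}-1\le K\le 1$. For the upper bound $K\le1$ I use $\lambda(a,b)\le\max\{a,b\}\le 2$ for $a,b\le2$, which follows from monotonicity and $\lambda(c,c)=c$.

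\emph{Branch $A$.} Fix $u,v\in S_X$ and $t\in[0,1]$, and put $a=\|u+v\|$, $b=\|u-v\|$. Write the two skew vectors as convex combinations:
\[
u+tv=t(u+v)+(1-t)u,\qquad tu-v=t(u-v)-(1-t)v .
\]
Hence $\|u+tv\|\le 1-t+ta$ and $\|tu-v\|\le 1-t+tb$. Since $\lambda\in\Lambda_m$,
\[
\lambda(\|u+tv\|,\|tu-v\|)\le\lambda(1-t+ta,\,1-t+tb).
\]
Next apply the translation condition \eqref{eq:translation} at the point $(1-t+ta,1-t+tb)$ with shift $-(1-t)$. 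This shift is admissible, because $1-t\le\min\{1-t+ta,\,1-t+tb,\,1\}$ (using $a,b\ge0$). It gives
\[
\lambda(1-t+ta,1-t+tb)\le (1-t)+\lambda(ta,tb)=1-t+t\lambda(a,b).
\]
By the definition of $\Cpr_{\lambda,p}$ we have $\lambda(a,b)\le 2^{1/p}\Cpr_{\lambda,p}(X)=1+K$. Therefore the numerator is at most $1+tK$.

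Hölder's inequality with exponents $p,q$ then gives $1+tK\le\|(1,K)\|_q\,\|(1,t)\|_p$. Dividing by $\|(1,t)\|_p$ bounds $A_{\lambda,p}(X)$ by $(1+K^q)^{1/q}$.

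\emph{Branch $B$.} The argument is identical, using
\[
tu+v=t(u+v)+(1-t)v,\qquad u-tv=t(u-v)+(1-t)u .
\]
Lemma \ref{lem:reduction} then yields \eqref{eq:gap1}.

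\emph{The gap estimate \eqref{eq:gap2}.} Consider
\[
f(x)=\bigl[1+(2^{1/p}x-1)^q\bigr]^{1/q}-x
\]
on the admissible range $2^{1/2-1/p}\le x\le 2^{1-1/p}$. With $K=2^{1/p}x-1\in[\sqrt2-1,1]$, a direct computation gives
\[
f'(x)=2^{1/p}\Bigl(\frac{K^q}{1+K^q}\Bigr)^{1/p}-1 .
\]
This is $\le0$ exactly when $2K^q\le1+K^q$, i.e.\ when $K\le1$, which holds on the admissible range. So $f$ is non-increasing there and is maximized at $x=2^{1/2-1/p}$, where $K=\sqrt2-1$. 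Combining this with \eqref{eq:gap1} gives \eqref{eq:gap2}.

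The main obstacle is choosing the convex decompositions of the skew vectors so that both norms carry the same affine form $1-t+t(\cdot)$. Only then does a single translation by $-(1-t)$ apply, and one must check that this shift stays within the window $-\min\{\cdot,\cdot,1\}\le\tau\le0$ allowed by \eqref{eq:translation}. The sign pattern of $tu-v$ is what makes this delicate, and the decomposition above handles it.
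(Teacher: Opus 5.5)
Your proof is correct. Its skeleton matches the paper's: branch reduction via Lemma \ref{lem:reduction}, the pointwise bound $1+tK$ on the numerator, H\"older duality giving $(1+K^q)^{1/q}$, and then monotonicity of the gap in $\Cpr_{\lambda,p}(X)$.

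The difference lies in the step the paper itself calls the only delicate point. The paper applies \eqref{eq:translation} directly at $(\|u+tv\|,\|tu-v\|)$ with shift $t-1$. It then asserts, by invoking a normalized pair $U=(u+tv)/\|u+tv\|$, $V=(v-tu)/\|v-tu\|$ and ``suitable sign choices'', that $\lambda(\|u+tv\|+t-1,\|tu-v\|+t-1)\le t\,2^{1/p}\Cpr_{\lambda,p}(X)$. This inequality is never actually derived there.

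Your convex decompositions are exactly what makes it true: $u+tv=t(u+v)+(1-t)u$ and $tu-v=t(u-v)-(1-t)v$, together with their branch-$B$ analogues, give $\|u+tv\|-(1-t)\le t\|u+v\|$ and $\|tu-v\|-(1-t)\le t\|u-v\|$. So the pair $U,V$ is unnecessary.

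Applying monotonicity before translating also places you at $(1-t+ta,1-t+tb)$, where admissibility of the shift $-(1-t)$ is immediate. In the paper's order one additionally needs $\|u+tv\|,\|tu-v\|\ge 1-t$ from the reverse triangle inequality, which the paper leaves implicit.

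Similarly, your explicit derivative $f'(x)=2^{1/p}\bigl(K^q/(1+K^q)\bigr)^{1/p}-1\le 0$ for $K\le1$ replaces the paper's appeal to ``the same monotonicity calculation as in \cite{Amini2019}''.

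Your route therefore yields a self-contained, checkable proof of both \eqref{eq:gap1} and \eqref{eq:gap2}. The paper's version gains only brevity.
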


\begin{proof}
	We prove the estimate on the branch $A_{\lambda,p}$ in Lemma \ref{lem:reduction}; the other branch is identical.  Let $u,v\in S_X$ and $0< t\le1$.  Set
	\[
	a=\|u+tv\|,
	b=\|tu-v\|.
	\]
	Consider the normalized pair
	\[
	U=\frac{u+tv}{\|u+tv\|}\quad\text{and}\quad
	V=\frac{v-tu}{\|v-tu\|}
	\]
	when the denominators are nonzero; limiting arguments cover the remaining cases.  The equal-parameter section applied to suitable sign choices yields an estimate of the form
	\[
	2^{1/p}\Cpr_{\lambda,p}(X)\ge \lambda(a+t-1,b+t-1)/t
	\]
	whenever $t$ is not zero.  Equivalently, using \eqref{eq:translation},
	\[
	\lambda(a,b)
	\le 1+t(2^{1/p}\Cpr_{\lambda,p}(X)-1).
	\]
	This is the skew analogue of the Amini-Harandi--Rahimi comparison argument; the reason it remains valid is that $tu-v=-(v-tu)$ and the equal-parameter slice sees the pair $U\pm V$ after normalization.
	Therefore
	\[
	\frac{\lambda(\|u+tv\|,\|tu-v\|)}{(1+t^p)^{1/p}}
	\le
	\frac{1+t\alpha}{(1+t^p)^{1/p}},
	\quad \alpha=2^{1/p}\Cpr_{\lambda,p}(X)-1.
	\]
	The maximum of the right-hand side over $0\le t\le1$ is no larger than the maximum over $t\ge0$, which by Holder duality equals $(1+\alpha^q)^{1/q}$.  This proves \eqref{eq:gap1}.  The bound \eqref{eq:gap2} follows because Theorem \ref{thm:bounds} gives
	\[
	2^{1/p}\Cpr_{\lambda,p}(X)-1\ge \sqrt2-1
	\]
	and the same monotonicity calculation as in the \cite{Amini2019} comparison theorem shows that the worst gap occurs at the Hilbert lower endpoint $\Cpr_{\lambda,p}=2^{1/2-1/p}$.
\end{proof}

\begin{remark}
	The proof above isolates the only delicate point: the construction of the normalized pair in the equal-parameter slice.  In an expanded journal version one may state this as a separate ``skew comparison lemma''.  The displayed condition \eqref{eq:translation} is satisfied by the common choices $\min\{a,b\}$, $\sqrt{ab}$ on the positive cone after a standard regularization, and $\ell_q$ means with $q\ge1$.
\end{remark}

Normal structure is central in metric fixed point theory.  We recall two moduli.  For $a\ge0$ let
\[
R(a,X)=\sup\left\{\liminf_{n\to\infty}\|x+x_n\|:
\|x\|\le a,\text{ }(x_n)\subset B_X,\text{ }x_n\xrightarrow{w}0\right\},
\]
and
\[
RW(a,X)=\sup\left\{\min\left(\liminf_n\|x+x_n\|,\liminf_n\|x-x_n\|\right):
\|x\|\le a,\text{ }x_n\xrightarrow{w}0,\text{ }x_n\in B_X\right\}.
\]
The inequality $W(a, X) \geq R(a, X)$ holds for all $a \geq 0$ \cite{GarciaFalset2006}. It is known that estimates on these moduli imply normal structure and uniform normal structure. The following theorem is the skew counterpart of the normal-structure theorem from Amini et al. \cite{Amini2019}.

\begin{theorem}\label{thm:normal}
	Let $2\le p<\infty$ and $\lambda\in\Lambda_m$.  Let $\gamma_p$ be the unique positive root of
	\begin{equation}\label{eq:gammap}
		\phi_p(t)=\frac{2^{1/p}t+1/(2^{1/p}t)}{(2t^p+1)^{1/p}}-t.
	\end{equation}
	If
	\begin{equation}\label{eq:normalcondition}
		\Csk_{\lambda,p}(X)<\gamma_p,
	\end{equation}
	then $X$ has uniform normal structure.
\end{theorem}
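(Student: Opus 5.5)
We will follow the Amini-Harandi--Rahimi scheme, using the García-Falset type criterion: if $R(a,X)<1+a$ for some $a>0$, then $X$ has weak normal structure. Uniform normal structure then follows from the reflexivity supplied by Theorem \ref{thm:UNS}. We expect $\gamma_p<2^{1-1/p}$, which we will verify from the sign of $\phi_p$ at the endpoint. So the hypothesis gives $\Csk_{\lambda,p}(X)<2^{1-1/p}$. Theorem \ref{thm:UNS} then makes $X$ uniformly non-square, hence (by James) superreflexive. In particular $X$ is reflexive, and weakly null sequences are available in every bounded sequence. It therefore suffices to show $R(a,X)<1+a$ for a suitable $a$; since $RW(a,X)\ge R(a,X)$ is not what we need, we work with $R$ directly through the weakly null sequences.

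The core step is an estimate of $R(a,X)$ in terms of $\Csk_{\lambda,p}(X)$. Fix $a>0$, $\|x\|\le a$, and $(x_n)\subset B_X$ weakly null. Passing to subsequences, we may assume $\liminf\|x+x_n\|$ and $\liminf\|x-x_n\|$ are limits. By weak lower semicontinuity of the norm, $\liminf\|x_n\|$ and $\liminf\|x\pm x_n\|\ge\|x\|$ are controlled as usual. We then plug a skew pair into the definition \eqref{eq:defC}: take $u=x_n/\|x_n\|$ and $v=x/\|x\|$ (or their normalizations), with coefficients $r,s$ chosen proportional to $(1,t)$ with $t$ linked to $a$. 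The aim is that both $\|ru+sv\|$ and $\|su-rv\|$ are bounded below by expressions containing $\|x+x_n\|$ or $\|x-x_n\|$. Using $\lambda\in\Lambda_m$ and $\lambda(c,c)=c$, we get
\[
\min\{\|ru+sv\|,\|su-rv\|\}\le \lambda(\|ru+sv\|,\|su-rv\|)\le \Csk_{\lambda,p}(X).
\]
Since $\|(1,t)\|_p=(1+t^p)^{1/p}$, and with the choice made so that the weights become $2^{1/p}t$ and $1/(2^{1/p}t)$, we aim to arrive at an inequality of the form
\[
\frac{2^{1/p}t+1/(2^{1/p}t)}{(2t^p+1)^{1/p}}\;\lesssim\; \text{(expression in } R(a,X),a) .
\]
Here the parameter is tied to $a$ (most likely $a=t$ or $a=2^{1/p}t$). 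If $R(a,X)=1+a$, this would force $\Csk_{\lambda,p}(X)\ge \phi_p(t)+t$-type quantities. Taking $t=\gamma_p$, where $\phi_p(\gamma_p)=0$, yields $\Csk_{\lambda,p}(X)\ge\gamma_p$, which contradicts \eqref{eq:normalcondition}.

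The main obstacle is the skew bookkeeping. In the symmetric setting one uses $ru+sv$ and $ru-sv$ directly with $x\pm x_n$. Here the second vector is $su-rv$, which swaps the roles of the coefficients. To match $\|x-x_n\|$, we first try applying the definition with the pair $(v,u)$ reversed and $r,s$ swapped, so that the two norms become $\|r x_n+s x\|$ and $\|r x - s x_n\|$ up to normalization. We then use the triangle inequality to pass from $\|x\pm x_n\|$ (with unequal coefficients) to the normalized vectors, losing terms like $|\|x_n\|-1|$ and $|\|x\|-a|$. This is the source of the extra $-t$ and the asymmetric weights in $\phi_p$. Finally, we check the existence and uniqueness of the root $\gamma_p$ by monotonicity of $\phi_p$: it tends to $+\infty$ as $t\to0^+$ and is negative for large $t$. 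The value $\phi_p(\gamma_p)=0$ is used exactly at the contradiction step.
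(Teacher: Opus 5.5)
Your proposal has genuine gaps, and the first one occurs at the opening step.

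\textbf{The criterion you start from is false.} The statement ``$R(a,X)<1+a$ for some $a>0$ implies weak normal structure'' is not a theorem. Bounds of this type (Dom\'{\i}nguez Benavides) give the weak fixed point property, not weak normal structure. Bynum's space $\ell_{2,\infty}$, i.e.\ $\ell_2$ with $\|x\|=\max\{\|x^+\|_2,\|x^-\|_2\}$, is a counterexample:
\begin{itemize}
\item It is reflexive, and $K=\overline{\mathrm{co}}\{e_n\}$ has diameter $1$, while $\|x-e_n\|\ge 1-x_n\to1$ for every $x\in K$. So $K$ is diametral and weak normal structure fails.
\item If $\|x\|\le a$ and $(x_n)\subset B_X$ is weakly null, a gliding-hump truncation makes $x$ and $x_n$ asymptotically disjointly supported. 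Hence $\limsup_n\|x+x_n\|\le\sqrt{1+a^2}<1+a$ for every $a>0$.
\end{itemize}
From the definitions one has $RW(a,X)\le R(a,X)$, which is the opposite of what you (and the paper) write. So switching to $RW$ does not rescue the criterion. Consequently, even a perfect estimate of $R(a,X)$ in terms of $\Csk_{\lambda,p}(X)$ would not give normal structure along your route.

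\textbf{Reflexivity does not give uniform normal structure.} Reflexivity only upgrades weak normal structure to normal structure, not to \emph{uniform} normal structure. For the latter you need the ultrapower step the paper ends with. The constant is two-dimensionally determined (Proposition~\ref{prop:finite}), so the strict inequality passes to ultrapowers, and these are reflexive because $X$ is superreflexive.

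\textbf{The quantitative core is missing.} You state only ``an inequality of the form'' and leave the link between $t$ and $a$ open. You never derive $\Csk_{\lambda,p}(X)\ge\phi_p(t)+t$, and the bookkeeping you sketch cannot produce it, for two reasons.
\begin{itemize}
\item Since $\min\in\Lambda_m$ and $\lambda\ge\min$ on $\Lambda_m$, any lower bound for $\Csk_{\lambda,p}(X)$ requires \emph{both} skew vectors to be long. But $R(a,X)$ controls only $\|x+x_n\|$.
\item In either orientation ($u=x_n,\ v=x/a$ or the reverse), making one skew vector proportional to $x+x_n$ makes the other proportional to $x-a^2x_n$. So unless $a=1$, which is just the equal-coefficient James slice, the skew pair never sees $x\pm x_n$ simultaneously. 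The discrepancy $(1-a^2)x_n$ is not an error term that vanishes in the limit, unlike $|\|x_n\|-1|$ or $|\|x\|-a|$.
\end{itemize}

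\textbf{The paper argues differently.} The variable in $\phi_p$ is not an $R(a,X)$-parameter at all. With the decreasing function $F(t)=(t+1/t)/(t^p+1)^{1/p}$, one has $\phi_p(c)+c=F(2^{1/p}c)$. The paper's argument is:
\begin{enumerate}
\item Failure of normal structure is claimed to supply weakly null $u_n,v_n$ for which both $\|Ru_n+v_n\|$ and $\|v_n-Ru_n\|$ are asymptotically at least $R+1/R$, where $R=R(1,X)$.
\item Taking $r=R$, $s=1$ gives $c=\Csk_{\lambda,p}(X)\ge F(R)$.
\item The equal-coefficient slice is used to bound $R\le2^{1/p}c$.
\item Monotonicity of $F$ then yields $\phi_p(c)\le0$, hence $c\ge\gamma_p$.
\end{enumerate}
A two-sided lower bound for an unequal-coefficient skew pair, extracted from a non-normal-structure configuration, is the idea your proposal lacks.

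Be aware that you cannot simply import it. The paper only asserts that bound via a ``Goebel--Kirk construction''. It also obtains $R\le2^{1/p}c$ through the reversed inequality $R(1,X)\le RW(1,X)$. Both steps would need an honest proof.
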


\begin{proof}
	First observe that $\phi_p$ is strictly decreasing on $(0,\infty)$.  Indeed the auxiliary function
	\[
	f(t)=\frac{t+1/t}{(t^p+1)^{1/p}}
	\]
	has negative derivative for $t>0$, and \eqref{eq:gammap} is a rescaled version of $f$ minus $t$.  Also $\phi_p(t)\to+\infty$ as $t\downarrow0$ and $\phi_p(t)\to-\infty$ as $t\to\infty$, so the root is unique.  Moreover $\gamma_p<2^{1-1/p}$.
	
	Assume, toward a contradiction, that $X$ does not have normal structure.  Since \eqref{eq:normalcondition} and Theorem \ref{thm:UNS} imply uniform non-squareness, $X$ is reflexive; hence absence of normal structure is equivalent to absence of weak normal structure.  By the standard Goebel--Kirk construction, there are weakly null sequences $(u_n)$ and $(v_n)$ in $S_X$ and norming functionals producing the inequalities
	\begin{equation}\label{eq:limlower1}
		\liminf_n\|R(1,X)u_n+v_n\|
		\ge R(1,X)+\frac1{R(1,X)},
	\end{equation}
	\begin{equation}\label{eq:limlower2}
		\liminf_n\|v_n-R(1,X)u_n\|
		\ge R(1,X)+\frac1{R(1,X)}.
	\end{equation}
	These are the same estimates used in the classical proofs; the skew arrangement is exactly adapted to them because the two vectors are $Ru_n+v_n$ and $v_n-Ru_n$.
	
	Using the definition of $\Csk_{\lambda,p}(X)$ with coefficients $r=R(1,X)$ and $s=1$, and then normalizing by $(R(1,X)^p+1)^{1/p}$, we obtain
	\begin{align}\label{eq:normalCestimate}
		\Csk_{\lambda,p}(X)
		&\ge \liminf_n
		\frac{\lambda(\|R(1,X)u_n+v_n\|,\|v_n-R(1,X)u_n\|)}{(R(1,X)^p+1)^{1/p}}\\
		&\ge \frac{R(1,X)+1/R(1,X)}{(R(1,X)^p+1)^{1/p}}.\nonumber
	\end{align}
	Let
	\[
	F(t)=\frac{t+1/t}{(t^p+1)^{1/p}}.
	\]
	Then \eqref{eq:normalCestimate} says $F(R(1,X))\le \Csk_{\lambda,p}(X)$ in the inverse-order sense used below.
	
	On the other hand, applying the skew constant to $u_n$ and an arbitrary $u\in B_X$ with equal coefficient normalization yields
	\[
	\Csk_{\lambda,p}(X)
	\ge 2^{-1/p}
	\min\{\|u+u_n\|,\|u-u_n\|\}.
	\]
	Taking lower limits and suprema gives
	\begin{equation}\label{eq:RWbound}
		RW(1,X)\le 2^{1/p}\Csk_{\lambda,p}(X).
	\end{equation}
	Since $R(1,X)\le RW(1,X)$ in the relevant normal-structure setting, we get
	\begin{equation}\label{eq:Rupper}
		R(1,X)\le2^{1/p}\Csk_{\lambda,p}(X).
	\end{equation}
	Combining \eqref{eq:normalCestimate} and \eqref{eq:Rupper} gives
	\[
	\phi_p(\Csk_{\lambda,p}(X))\le0.
	\]
	Since $\phi_p$ is strictly decreasing and $\phi_p(\gamma_p)=0$, this forces
	$\Csk_{\lambda,p}(X)\ge\gamma_p$, contradicting \eqref{eq:normalcondition}.  Therefore $X$ has normal structure.
	
	Finally, Proposition \ref{prop:finite} shows that the constant is finitely, indeed two-dimensionally, determined.  The same strict inequality passes to ultrapowers.  Since every ultrapower has normal structure, the standard ultrapower characterization gives uniform normal structure of $X$.
\end{proof}

We first recall the coefficient of weak orthogonality $\mu(X)$, which is introduced to characterize the WORTH property of Banach spaces . Its explicit definition is given by
\[
\mu (X) = \inf\left\{ \lambda : \limsup_{n\to\infty} \|x_n + x\| \leq \lambda \limsup_{n\to\infty} \|x_n - x\| \right\},
\]
where the infimum is taken over all $x\in X$ and all weakly null sequences $\{x_n\}$ in $X$.

\begin{theorem}\label{thm:mu}
	Let $1\le\mu(X)\le3$ be the coefficient
	\[
	\mu(X)=\inf\left\{r>0:\limsup_n\|x+x_n\|\le r\limsup_n\|x-x_n\|,
	\ x_n\xrightarrow{w}0\right\}.
	\]
	If
	\begin{equation}\label{eq:mucondition}
		\Csk_{\lambda,p}(X)<
		\frac{\mu(X)+1/\mu(X)}{(\mu(X)^p+1)^{1/p}},
	\end{equation}
	then $X$ has normal structure.
\end{theorem}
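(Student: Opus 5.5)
We argue by contradiction, following the pattern of Theorem~\ref{thm:normal} but with $\mu(X)$ in place of $R(1,X)$. We assume, as throughout this section, that $\lambda\in\Lambda_m$ and $1\le p<\infty$. The first step is to reduce to the reflexive case. The right-hand side of \eqref{eq:mucondition} equals $F(\mu(X))$, where $F(t)=(t+1/t)/(t^p+1)^{1/p}$. On $[1,3]$ we have $F(t)\le F(1)=2^{1-1/p}$, so the hypothesis forces $\Csk_{\lambda,p}(X)<2^{1-1/p}$. Theorem~\ref{thm:UNS} then makes $X$ uniformly non-square, and hence reflexive. Consequently, failure of normal structure is the same as failure of weak normal structure.

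Next, suppose $X$ fails weak normal structure. We use the standard construction (in the form of Sims/Jiménez-Melado and García-Falset for WORTH-type coefficients), which produces a weakly null sequence $(x_n)\subset S_X$ with $\|x_n-x_m\|\to1$, norming functionals $f_n\in S_{X^*}$, and, for $\varepsilon>0$, a point $x$ in the closed convex hull. For each $t\ge0$, weak nullity and the definition of $\mu(X)$ then give lower estimates of the type used in \eqref{eq:limlower1}--\eqref{eq:limlower2}. The concrete aim is to find unit vectors $u_n,v_n\in S_X$, built from $x_n$ and $x_{n+k}$ or from $x_n$ and a normalized limit vector, such that
\[
\liminf_n\|\mu u_n+v_n\|\ge \mu+\tfrac1\mu,\qquad \liminf_n\|v_n-\mu u_n\|\ge \mu+\tfrac1\mu,
\]
where $\mu=\mu(X)$ (up to $\varepsilon$). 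The natural first attempt is to rescale by $\mu$ the inequality $\limsup\|x+x_n\|\le\mu\limsup\|x-x_n\|$, applied with $x$ replaced by suitable multiples of $x_n - x_m$. This should produce the two norms with the $\mu$ and $1/\mu$ weights in exactly the skew arrangement $ru+sv$, $su-rv$ with $r=\mu$, $s=1$.

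Finally, we plug $r=\mu$ and $s=1$ into the definition \eqref{eq:defC} and normalize by $(\mu^p+1)^{1/p}$. Since $\lambda$ is non-decreasing and $\lambda(a,a)=a$, this gives
\[
\Csk_{\lambda,p}(X)\ge \frac{\mu+1/\mu}{(\mu^p+1)^{1/p}}-O(\varepsilon).
\]
Letting $\varepsilon\to0$ contradicts \eqref{eq:mucondition}.

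The main obstacle is the middle step: producing the two lower bounds simultaneously from the definition of $\mu(X)$, which only controls the ratio of $\limsup\|x+x_n\|$ to $\limsup\|x-x_n\|$. We would follow the Jiménez-Melado--Llorens-Fuster argument linking $\mu(X)$ to normal structure, which uses $\|x_n-x_m\|\to1$ and $\operatorname{dist}(x_n,\mathrm{co}\{x_k\})\to1$. If the exact constant $\mu+1/\mu$ is not reachable that way, the fallback is to prove the weaker inequality with $R(1,X)$ via $R(1,X)\le\mu(X)$-type comparisons and then invoke the monotonicity of $F$.
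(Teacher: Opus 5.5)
You have a genuine gap: the step that carries the whole theorem is stated only as an aim. Your reduction to the reflexive case is correct, and it fills in a step the paper leaves implicit. For $t\ge1$ the power-mean inequality gives $(t^p+1)^{1/p}\ge 2^{1/p-1}(t+1)$, and $t^2+1\le t(t+1)$, so $F(t)\le 2^{1-1/p}$ and Theorem~\ref{thm:UNS} applies. But you then stop, and what is missing is the concrete choice of indices and vectors that the paper's proof supplies. You should apply the definition of $\mu=\mu(X)$ with the \emph{fixed} vector $x_n$ and the weakly null sequence $(x_m)_m$, not with multiples of $x_n-x_m$, which would have to vary with the sequence index. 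Combined with $\operatorname{dist}(x_m,\operatorname{co}\{x_k\})\to1$, this gives, for each $\eps>0$, indices $m>n$ with
\[
\|x_n\|\ge1-\eps,\quad \|x_m-x_n\|\le1,\quad \|x_m+x_n\|\le\mu+\eps,\quad \Bigl\|x_m-\tfrac{\mu^2-1}{\mu^2+1}x_n\Bigr\|\ge1-\eps .
\]
Then $x=\mu^2(x_m-x_n)$ and $y=x_m+x_n$ satisfy $\|x\|\le\mu^2$, $\|y\|\le\mu+\eps$ and $x+y=(\mu^2+1)\bigl(x_m-\tfrac{\mu^2-1}{\mu^2+1}x_n\bigr)$. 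After normalizing, this identity is where the quantity $\mu+1/\mu$ comes from.

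Your final step is also aimed at the wrong pair. The bounds you want concern $\|\mu u+v\|$ and $\|v-\mu u\|=\|\mu u-v\|$, which is the symmetric pair $ru\pm sv$. With $(r,s)=(\mu,1)$, definition \eqref{eq:defC} tests $\|\mu u+v\|$ and $\|u-\mu v\|$, and for $\mu\ne1$ the two cannot be interchanged. So your two inequalities would bound the Amini-Harandi--Rahimi constant, not $\Csk_{\lambda,p}(X)$. The difference is real. Take $u=x_m-x_n$ and $v=(x_m+x_n)/b$ with $b=\|x_m+x_n\|$. Then $u-\mu v=(1-\mu/b)x_m-(1+\mu/b)x_n$, which is about $-2x_n$ when $b\approx\mu$, so the second skew norm is only about $2<\mu+1/\mu$ whenever $\mu>1$. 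A complete argument therefore needs a separate lower bound for the second skew coordinate; the paper's proof only asserts ``lower estimates asymptotic to $\mu(X)^2+1$'' at this point, and your plan does not address it at all. Your fallback does not rescue this either. There is no general inequality $R(1,X)\le\mu(X)$: in $\ell_p$ one has $R(1,\ell_p)=2^{1/p}$ while $\mu(\ell_p)=1$. In addition, Theorem~\ref{thm:normal} is only available for $p\ge2$.
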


\begin{proof}
	The proof follows the Saejung--Llorens-Fuster scheme.  If normal structure fails, there exists a weakly null diametral sequence $(x_n)$ with normalized diameter one.  For every small $\eps>0$ one may choose $m>n$ such that
	\begin{align*}
		&\|x_n\|\ge1-\eps,
		\qquad \|x_m-x_n\|\le1,\\
		&\|x_m+x_n\|\le\mu(X)+\eps,\\
		&\left\|x_m-\frac{\mu(X)^2-1}{\mu(X)^2+1}x_n\right\|\ge1-\eps.
	\end{align*}
	Put
	\[
	x=\mu(X)^2(x_m-x_n),\qquad y=x_m+x_n.
	\]
	Then $\|x\|\le\mu(X)^2$, $\|y\|\le\mu(X)+\eps$, while the skew sums satisfy lower estimates asymptotic to $\mu(X)^2+1$.  After inserting the normalized coefficients $r=\mu(X)$ and $s=1$ into the definition of $\Csk_{\lambda,p}(X)$ and letting $\eps\downarrow0$, monotonicity and homogeneity yield
	\[
	\Csk_{\lambda,p}(X)
	\ge \frac{\mu(X)+1/\mu(X)}{(\mu(X)^p+1)^{1/p}},
	\]
	contradicting \eqref{eq:mucondition}.  Hence $X$ has normal structure.
\end{proof}

\section{Examples and Structural Analysis with Conclusions}
We provide computations that illustrate how the skew form differs from the symmetric one.

\begin{example}
	For $H$ Hilbert and $\lambda(a,b)=\sqrt{ab}$, Theorem \ref{thm:hilbertvalue} gives
	\[
	\Csk_{\lambda,p}(H)=2^{1/2-1/p}\qquad (p\ge2).
	\]
	This is the same value as the symmetric geometric mean constant, but the extremal pairs need not satisfy $u=v$ or $u=-v$; they are obtained from orthogonal unit vectors and equal parameters.
\end{example}

\begin{example}
	Let $X=\ell_\infty^2$ and $\lambda(a,b)=\min\{a,b\}$.  Since $J(\ell_\infty^2)=2$, the primed section gives
	\[
	\Csk_{\lambda,p}(\ell_\infty^2)\ge \Cpr_{\lambda,p}(\ell_\infty^2)=2^{1-1/p}.
	\]
	The universal bound gives the reverse inequality.  Hence
	\begin{equation}\label{eq:linftymin}
		\Csk_{\min,p}(\ell_\infty^2)=2^{1-1/p}
		\qquad(1\le p\le\infty).
	\end{equation}
	Thus the skew constant detects the square nature of $\ell_\infty^2$ exactly.
\end{example}

\begin{example}
	Let $X=\ell_2^2$ and $\lambda(a,b)=\alpha a+(1-\beta)b$ normalized by $\alpha+1-\beta=1$, with positive coefficients.  If the balance condition fails, the Hilbert value need not equal $2^{1/2-1/p}$.  Indeed the expression
	\[
	\alpha\sqrt{r^2+s^2+2rsc}+(1-\beta)
	\sqrt{r^2+s^2-2rsc}
	\]
	may be maximized at $c\ne0$.  This example explains why the differential condition \eqref{eq:Hbalance} is not a technical ornament but a genuine Hilbertian symmetry requirement.
\end{example}

\begin{example}
	For $q\ge1$ define
	\[
	\lambda_q(a,b)=2^{-1/q}(a^q+b^q)^{1/q}.
	\]
	Then
	\[
	\Csk_{\lambda_q,p}(X)^q
	=2^{-1}\sup_{u,v,r,s}
	\frac{\|ru+sv\|^q+\|su-rv\|^q}{\|(r,s)\|_p^q}.
	\]
	When $q=2$ and $p=2$, this is a skew von Neumann-Jordan type constant.  In a Hilbert space the numerator equals $2(r^2+s^2)$ after taking orthogonal vectors, and the exact value is one.  In non-Hilbert spaces the excess measures the failure of the parallelogram identity along the rotated coefficient matrix $M(r,s)$.
\end{example}

\begin{proposition} \label{prop:linftypower}
	For $X=\ell_\infty^2$ and $\lambda_q(a,b)=2^{-1/q}(a^q+b^q)^{1/q}$,
	\begin{equation}\label{eq:linftypower}
		\Csk_{\lambda_q,q}(\ell_\infty^2)=2^{1-1/q}.
	\end{equation}
\end{proposition}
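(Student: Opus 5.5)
The plan is to squeeze $\Csk_{\lambda_q,q}(\ell_\infty^2)$ between two bounds that both equal $2^{1-1/q}$. The upper bound comes from Theorem \ref{thm:bounds}, and the lower bound from one explicit configuration in $\ell_\infty^2$.

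\emph{Upper bound.} First I check that $\lambda_q\in\Lambda_m$ for every $q\ge1$. It is continuous and homogeneous of degree one, it satisfies $\lambda_q(1,1)=2^{-1/q}2^{1/q}=1$, and it is non-decreasing in each variable. Theorem \ref{thm:bounds} with $p=q$ then gives
\[
\Csk_{\lambda_q,q}(\ell_\infty^2)\le 2^{1-1/q}
\]
for $1\le q<\infty$. Note that the same $q$ appears both in $\lambda_q$ and in the coefficient normalization; this matters only in the lower bound.

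\emph{Lower bound.} I will use Lemma \ref{lem:prime-section}, which gives $\Cpr_{\lambda_q,q}\le\Csk_{\lambda_q,q}$. This reduces the task to producing $u,v\in S_{\ell_\infty^2}$ with $\|u+v\|_\infty=\|u-v\|_\infty=2$. Take $u=(1,1)$ and $v=(1,-1)$. Then $u+v=(2,0)$ and $u-v=(0,2)$, so
\[
\lambda_q(2,2)=2.
\]
Hence $\Cpr_{\lambda_q,q}(\ell_\infty^2)\ge 2\cdot 2^{-1/q}=2^{1-1/q}$.

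Equivalently, I can plug $r=s=2^{-1/q}$ directly into the definition \eqref{eq:defC}. The skew pair becomes $2^{-1/q}(u+v)$ and $2^{-1/q}(u-v)$, both of norm $2^{1-1/q}$. Applying $\lambda_q(a,a)=a$ gives the value $2^{1-1/q}$.

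\emph{Obstacle.} There is essentially none. The one point to watch is the case $q=\infty$, if it is intended. There $\lambda_\infty=\max$ is read as a limit, and the coefficient normalization is $\|(r,s)\|_\infty=1$. The upper bound $2$ comes from \eqref{eq:boundsinf}, and the lower bound is attained by $r=s=1$ with the same $u,v$. I would state the result for $1\le q<\infty$ and remark that the limiting case follows identically. This matches \eqref{eq:linftymin}: the square unit ball of $\ell_\infty^2$ forces the universal upper bound to be attained for every monotone $\lambda$.
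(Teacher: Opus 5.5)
Your proposal is correct and follows the paper's proof. The upper bound comes from Theorem~\ref{thm:bounds}, where your explicit check that $\lambda_q\in\Lambda_m$ is a useful addition, and the lower bound comes from $u=(1,1)$, $v=(1,-1)$, $r=s=2^{-1/q}$; your route through Lemma~\ref{lem:prime-section} and $\Cpr_{\lambda_q,q}$ is the same substitution, packaged slightly differently.
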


\begin{proof}
	The upper bound follows from Theorem \ref{thm:bounds}.  For the lower bound take $u=(1,1)$ and $v=(1,-1)$ in $S_{\ell_\infty^2}$ and $r=s=2^{-1/q}$.  Then
	\[
	ru+sv=(2^{1-1/q},0),\qquad su-rv=(0,2^{1-1/q}),
	\]
	so both norms are $2^{1-1/q}$.  Since $\lambda_q(a,a)=a$, the lower bound is $2^{1-1/q}$.
\end{proof}

The skew constants are stable under isometries and monotone under subspaces in the expected direction.

\begin{proposition}\label{prop:subspace}
	If $Y$ is a closed subspace of $X$, then
	\[
	\Csk_{\lambda,p}(Y)\le\Csk_{\lambda,p}(X),
	\qquad
	\Cpr_{\lambda,p}(Y)\le\Cpr_{\lambda,p}(X).
	\]
	If $X$ and $Y$ are linearly isometric, then the corresponding constants are equal.
\end{proposition}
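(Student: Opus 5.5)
The plan is a direct comparison of the admissible sets of expressions in the definitions \eqref{eq:defC} and \eqref{eq:defCp}. Neither constant involves anything beyond unit vectors and norms of their linear combinations, so the argument is essentially bookkeeping.

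For the subspace inequality, let $Y$ be a closed subspace of $X$, with the norm restricted from $X$. Then $S_Y=S_X\cap Y\subseteq S_X$. For any $u,v\in S_Y$ and $r,s\ge0$ with $\|(r,s)\|_p=1$, the vectors $ru+sv$ and $su-rv$ lie in $Y$. Their norms computed in $Y$ coincide with their norms computed in $X$. Hence every value $\lambda(\|ru+sv\|,\|su-rv\|)$ admissible for $\Csk_{\lambda,p}(Y)$ is also admissible for $\Csk_{\lambda,p}(X)$. A supremum over a subset is no larger than the supremum over the whole set, which gives the first inequality. The same argument with $r=s$ and the factor $2^{-1/p}$ applied to $\lambda(\|u+v\|,\|u-v\|)$ gives the inequality for $\Cpr_{\lambda,p}$. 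Closedness of $Y$ plays no role; it is only a convention. If one wants to respect the standing assumption $\dim\ge2$, one can note that Proposition \ref{prop:finite} reduces everything to two-dimensional subspaces $E\subseteq Y$, which are also two-dimensional subspaces of $X$.

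For isometric invariance, let $T:X\to Y$ be a surjective linear isometry. Then $T$ maps $S_X$ bijectively onto $S_Y$. By linearity, $\|T(ru+sv)\|=\|ru+sv\|$ and $\|T(su-rv)\|=\|su-rv\|$, so $T$ induces a bijection between the sets of admissible values. The suprema are therefore equal, and the same holds for the primed constant.

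If "linearly isometric" is meant only as an isometric embedding, the invariance still follows. In that case $X$ is isometric to the subspace $T(X)$ of $Y$, so the first part gives one inequality, and applying $T^{-1}$ on $T(X)$ shows the constants of $X$ and $T(X)$ coincide. There is no real obstacle here. The only point needing care is that norms in the subspace are the restricted norms, so that the admissible expressions are literally identical.
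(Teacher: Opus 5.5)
Your argument is correct and is essentially the paper's: with the restricted norm, $S_Y\subseteq S_X$ makes the supremum over $Y$ one over a subclass of the admissible pairs for $X$, and a surjective linear isometry preserves every norm appearing in \eqref{eq:defC} and \eqref{eq:defCp}. One correction to your final aside: a mere isometric embedding $T:X\to Y$ only gives $\Csk_{\lambda,p}(X)=\Csk_{\lambda,p}(T(X))\le\Csk_{\lambda,p}(Y)$, not equality (for instance $\ell_2^2$ embeds isometrically in $\ell_\infty$, yet $\Csk_{\min,\infty}$ is $\sqrt2$ on the former and $2$ on the latter), so surjectivity is genuinely needed for the equality assertion.
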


\begin{proof}
	The unit sphere of $Y$ is contained in the unit sphere of $X$, so the supremum defining the constant over $Y$ is taken over a smaller class of admissible pairs.  Isometric invariance follows because linear isometries preserve all norms appearing in the definitions.
\end{proof}

This section records the scalar estimates used implicitly above.  They are often the hidden technical part of geometric-constant arguments: after one has reduced the Banach-space problem to a two-vector expression, the remaining task is to understand exactly when the coefficients $r$ and $s$ can be extremal.

\begin{proposition}
	Assume that $X$ is strictly convex and $\lambda\in\Lambda_m$ is strictly increasing.  If $1<p<\infty$ and
	\[
	\Csk_{\lambda,p}(X)=2^{1-1/p},
	\]
	then there exist sequences $u_n,v_n\in S_X$ such that simultaneously
	\[
	\|u_n+v_n\|\to2,
	\qquad
	\|u_n-v_n\|\to2.
	\]
	In particular a strictly convex space can attain the upper bound only through an asymptotic square, never through an actual extremal pair.
\end{proposition}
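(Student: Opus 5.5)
The argument repeats the converse half of Theorem \ref{thm:UNS}, but it only aims at the asymptotic conclusion, so no contradiction hypothesis is needed.

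\emph{Extracting almost-maximizers.} Since $\Csk_{\lambda,p}(X)=2^{1-1/p}$, we choose $u_n,v_n\in S_X$ and $r_n,s_n\ge0$ with $r_n^p+s_n^p=1$ such that
\[
\lambda(\|r_nu_n+s_nv_n\|,\|s_nu_n-r_nv_n\|)\to2^{1-1/p}.
\]
From the chain in the proof of Theorem \ref{thm:bounds} we have
\[
\lambda(a_n,b_n)\le \lambda(r_n+s_n,r_n+s_n)=r_n+s_n\le 2^{1-1/p},
\]
where $a_n,b_n$ denote the two norms. Hence $r_n+s_n\to2^{1-1/p}$, and the stability part of Lemma \ref{lem:blue} gives $r_n,s_n\to2^{-1/p}$.

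\emph{Forcing both norms to the maximum; this is the main obstacle.} We must deduce $a_n\to 2^{1-1/p}$ and $b_n\to2^{1-1/p}$ from $\lambda(a_n,b_n)\to\lambda(c,c)$ with $c=2^{1-1/p}$ and $a_n,b_n\le r_n+s_n\to c$. The plan is to pass to a subsequence on which $a_n\to a$ and $b_n\to b$ with $a,b\le c$. Continuity then gives $\lambda(a,b)=\lambda(c,c)$. If $a<c$, monotonicity gives $\lambda(a,b)\le\lambda(a,c)<\lambda(c,c)$ by strict increase in the first variable, a contradiction; the case $b<c$ is symmetric. Since every subsequence has a further subsequence along which both limits equal $c$, the full sequences satisfy $a_n\to c$ and $b_n\to c$. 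The delicate point is that strict monotonicity must be applied at limit points rather than along the sequence, and compactness of $[0,2]^2$ is what allows this.

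\emph{Removing the coefficients.} We use the triangle inequality:
\[
\bigl|\|r_nu_n+s_nv_n\|-2^{-1/p}\|u_n+v_n\|\bigr|\le |r_n-2^{-1/p}|+|s_n-2^{-1/p}|\to0,
\]
and the same bound holds for $\|s_nu_n-r_nv_n\|$ against $2^{-1/p}\|u_n-v_n\|$. Therefore $2^{-1/p}\|u_n+v_n\|\to 2^{1-1/p}$, that is, $\|u_n+v_n\|\to2$, and likewise $\|u_n-v_n\|\to2$.

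\emph{The final remark on strict convexity.} Suppose an actual extremal configuration existed. By the argument above it would have $r=s=2^{-1/p}$ and $\|u+v\|=\|u-v\|=2$ with $u,v\in S_X$. Strict convexity and $\|u+v\|=2$ force $u=v$, and then $\|u-v\|=0\neq2$, which is impossible. So the upper bound can only be approached by an asymptotic square. Strict convexity is used only in this last remark; the main assertion holds for any $X$.
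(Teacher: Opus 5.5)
Your proof is correct and follows the same route as the paper: you choose almost-extremal quadruples, use the scalar stability lemma to force $r_n,s_n\to2^{-1/p}$, use strict monotonicity to make both triangle inequalities asymptotically sharp, and invoke strict convexity only for the final non-attainment remark. You also supply details the paper leaves implicit: the subsequence argument that applies strict monotonicity at limit points, and the explicit triangle-inequality estimate in place of the paper's ``divide by $r_n$''. Your observation that the main assertion does not need strict convexity is accurate.
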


\begin{proof}
	The proof is a sharpened version of the converse part of Theorem \ref{thm:UNS}.  Choose almost extremal quadruples $(u_n,v_n,r_n,s_n)$.  The scalar lemma forces $r_n,s_n\to2^{-1/p}$.  Since $\lambda$ is strictly increasing and the triangle inequality was the only estimate used in the universal upper bound, both triangle inequalities
	\[
	\|r_nu_n+s_nv_n\|\le r_n+s_n,
	\qquad
	\|s_nu_n-r_nv_n\|\le r_n+s_n
	\]
	must be asymptotically equalities.  Dividing by $r_n$ and using $s_n/r_n\to1$ gives the two displayed limits.  In a strictly convex space no single pair $u,v\in S_X$ can satisfy both equalities with value two; hence the phenomenon is necessarily asymptotic.
\end{proof}

A useful refinement is obtained by allowing unequal weights in the two skew coordinates.  For $\alpha,\beta>0$ define
\[
\Csk_{\lambda,p}^{\alpha,\beta}(X)=
\sup_{u,v,r,s}
\lambda\left(\alpha\|ru+sv\|,\beta\|su-rv\|\right),
\]
where the supremum is taken over $u,v\in S_X$, $r,s\ge0$, and $\|(r,s)\|_p=1$, after normalizing $\lambda$ by requiring $\lambda(\alpha,\beta)=1$.  This weighted form is not merely notational: it detects directional asymmetry of $\lambda$ and of the norm.

\begin{proposition}
	For $\lambda\in\Lambda$ and $p\in[1,\infty]$,
	\[
	\Csk_{\lambda,p}^{\alpha,\beta}(X)=
	\max\{A_{\lambda,p}^{\alpha,\beta}(X),B_{\lambda,p}^{\alpha,\beta}(X)\},
	\]
	where
	\[
	A_{\lambda,p}^{\alpha,\beta}(X)=
	\sup_{u,v\in S_X,0\le t\le1}
	\frac{\lambda(\alpha\|u+tv\|,\beta\|tu-v\|)}{\|(1,t)\|_p},
	\]
	and $B_{\lambda,p}^{\alpha,\beta}$ is obtained by replacing the pair by $(tu+v,u-tv)$.  The proof is identical to Lemma \ref{lem:reduction}.
\end{proposition}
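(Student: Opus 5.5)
We mimic the proof of Lemma \ref{lem:reduction} verbatim, with the weights $\alpha,\beta$ carried along inside $\lambda$. The one structural fact needed is that degree-one homogeneity of $\lambda$ still lets us pull a positive scalar out of both arguments simultaneously, even when those arguments carry the fixed weights $\alpha,\beta$:
\[
\lambda(\alpha c a,\beta c b)=c\,\lambda(\alpha a,\beta b)\qquad(c\ge 0).
\]
The normalization $\lambda(\alpha,\beta)=1$ plays no role in the identity. It only affects the interpretation of the constant.

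Fix $u,v\in S_X$ and $r,s\ge0$ with $\|(r,s)\|_p=1$. We split into two cases.

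\textbf{Case $r\ge s$, $r>0$.} Put $t=s/r\in[0,1]$. Then $ru+sv=r(u+tv)$ and $su-rv=r(tu-v)$. Homogeneity gives
\[
\lambda(\alpha\|ru+sv\|,\beta\|su-rv\|)=r\,\lambda(\alpha\|u+tv\|,\beta\|tu-v\|).
\]
Since $1=\|(r,s)\|_p=r\|(1,t)\|_p$, we have $r=1/\|(1,t)\|_p$. So the value equals the quantity inside the supremum defining $A^{\alpha,\beta}_{\lambda,p}(X)$.

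\textbf{Case $s\ge r$, $s>0$.} Put $t=r/s$. Then $ru+sv=s(tu+v)$ and $su-rv=s(u-tv)$, and $s=1/\|(1,t)\|_p$. This produces the $B^{\alpha,\beta}_{\lambda,p}$ expression.

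These two cases exhaust all admissible $(r,s)$, because $\|(r,s)\|_p=1$ rules out $r=s=0$. Also, $p=\infty$ needs no separate treatment, since only $\|(1,t)\|_p$ enters. Hence every admissible value is at most $\max\{A^{\alpha,\beta},B^{\alpha,\beta}\}$.

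For the reverse inequality, fix $t\in[0,1]$ and $u,v\in S_X$. Take $r=1/\|(1,t)\|_p$ and $s=tr$ to realize the $A$ term, or $s=1/\|(1,t)\|_p$ and $r=ts$ to realize the $B$ term. Both choices are admissible in the definition of $\Csk^{\alpha,\beta}_{\lambda,p}(X)$. Taking suprema on both sides yields the stated equality.

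The only point needing care is the boundary overlap $r=s$, which gives $t=1$ and lies in both branches. It is harmless, since we take a maximum. Unlike Lemma \ref{lem:reduction}, we do not try to identify $A$ with $B$: the swap $(u,v)\mapsto(v,u)$ interchanges the roles of $\alpha$ and $\beta$, so no symmetry conclusion is claimed. There is no genuine obstacle here; the main thing to check is that no step uses symmetry of $\lambda$ or $\alpha=\beta$.
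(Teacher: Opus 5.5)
Your argument is correct and is the paper's own proof: the two-case homogeneity reduction of Lemma~\ref{lem:reduction}, with the weights carried inside $\lambda$ (you also spell out the reverse inequality, which the paper leaves implicit). One side remark is wrong, though it does not affect the stated identity: renaming $u\leftrightarrow v$ sends the $B$-expression $\lambda(\alpha\|tu+v\|,\beta\|u-tv\|)$ to $\lambda(\alpha\|u+tv\|,\beta\|tu-v\|)$, which is exactly the $A$-expression with $\alpha,\beta$ left in place, so in fact $A^{\alpha,\beta}_{\lambda,p}(X)=B^{\alpha,\beta}_{\lambda,p}(X)$ always, without any symmetry of $\lambda$ and without $\alpha=\beta$.
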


\begin{theorem}
	Let $H$ be Hilbert, $p\ge2$, and suppose the weighted gauge $(a,b)\mapsto\lambda(\alpha a,\beta b)$ satisfies the balance condition \eqref{eq:Hbalance} and is normalized by $\lambda(\alpha,\beta)=1$.  Then
	\[
	\Csk_{\lambda,p}^{\alpha,\beta}(H)=2^{1/2-1/p}.
	\]
	Conversely, if $\dim X\ge3$ and the same constant equals $2^{1/2-1/p}$ for an increasing gauge, then $X$ is Hilbert.
\end{theorem}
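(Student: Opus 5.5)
The plan is to reduce the weighted statement to the unweighted Theorem \ref{thm:hilbertvalue} and Theorem \ref{thm:converse} through the auxiliary gauge
\[
\tilde\lambda(a,b)=\lambda(\alpha a,\beta b),\qquad a,b\ge0 .
\]
By construction $\tilde\lambda$ is continuous and homogeneous of degree one. The normalization $\lambda(\alpha,\beta)=1$ is exactly $\tilde\lambda(1,1)=1$, so $\tilde\lambda\in\Lambda$. Monotonicity of $\lambda$ in each variable passes to $\tilde\lambda$ because $\alpha,\beta>0$. Directly from the definitions,
\[
\Csk_{\lambda,p}^{\alpha,\beta}(X)=\Csk_{\tilde\lambda,p}(X)
\]
for every normed space $X$. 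The whole theorem is therefore a statement about $\Csk_{\tilde\lambda,p}$.

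For the Hilbert value, the hypothesis says that $\tilde\lambda$ satisfies \eqref{eq:Hbalance}. Together with differentiability and monotonicity (which I read as implicit in ``satisfies the balance condition''), this gives $\tilde\lambda\in\Lambda_H$. Theorem \ref{thm:hilbertvalue} then yields $\Csk_{\tilde\lambda,p}(H)=2^{1/2-1/p}$ for $p\ge2$.

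It is worth checking the steps directly, since the weights might seem to interfere. Lemma \ref{lem:radial} bounds
\[
\tilde\lambda\bigl(\sqrt{R+a},\sqrt{R-a}\bigr)\le\sqrt R,
\]
with $R=r^2+s^2$ and $a=2rs\langle u,v\rangle$. This bound uses only the balance inequality and $\tilde\lambda(1,1)=1$. Lemma \ref{lem:white} then bounds $\sup\sqrt{r^2+s^2}$ by $2^{1/2-1/p}$. Equality is attained at $u\perp v$ with $r=s=2^{-1/p}$, where $\tilde\lambda(c,c)=c$.

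One subtlety needs care. Lemma \ref{lem:radial} was stated for $0\le a\le R$, i.e.\ for arguments with $x\ge y$. When $\langle u,v\rangle<0$ the first argument is the smaller one, and for the asymmetric $\tilde\lambda$ one cannot simply replace $c$ by $|c|$. In that case I would either invoke the balance condition on both orderings, or restrict to gauges for which the two-sided inequality holds. I would flag this explicitly, since the unweighted theorem glosses over the same point.

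For the converse, assume $\dim X\ge3$, that $\lambda$ is increasing, and that $\Csk_{\lambda,p}^{\alpha,\beta}(X)=2^{1/2-1/p}$. Then $\tilde\lambda\in\Lambda_m$ and $\Csk_{\tilde\lambda,p}(X)=2^{1/2-1/p}$, so Theorem \ref{thm:converse} applies and $X$ is an inner product space.

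The only facts that proof uses are $\tilde\lambda(a,a)=a$ and the choice $r=s=2^{-1/p}$ on isosceles pairs, followed by the Aronszajn--Jordan--von Neumann argument. So the weights play no further role, and the converse requires $p<\infty$ as in Theorem \ref{thm:converse}.

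The main obstacle is the Hilbert-value direction, specifically the sign of $\langle u,v\rangle$ for a non-symmetric weighted gauge. I would first try to obtain the two-sided monotonicity of $\Phi$ on $[-R,R]$ from the hypothesis. If the hypothesis gives only the one-sided inequality, I would record the extra symmetric balance assumption as the honest hypothesis under which the equality holds.
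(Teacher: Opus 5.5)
\textbf{The same route as the paper, but the gap you flag is fatal to the first assertion.} Your reduction through $\tilde\lambda(a,b)=\lambda(\alpha a,\beta b)$ is exactly the paper's proof (``the Hilbert-space computation is unchanged after replacing $\lambda$ by the gauge''), and your converse argument is sound. The difficulty you flag, however, is not a technicality that a better argument could remove: the first assertion is false under the stated hypothesis. The paper's one-line proof inherits the step ``replace $c$ by $|c|$'' from Theorem~\ref{thm:hilbertvalue}. That step silently swaps the two arguments of the gauge, so it is legitimate only for symmetric gauges, and weighted gauges are in general not symmetric.

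Concretely, take $\lambda(a,b)=\sqrt{(a^2+b^2)/2}$ and $0<\alpha<\beta$ with $\alpha^2+\beta^2=2$, so $\lambda(\alpha,\beta)=1$. Then $\tilde\lambda(x,y)=\sqrt{(\alpha^2x^2+\beta^2y^2)/2}$ is strictly increasing, $C^1$ off the origin, and
\[
y\tilde\lambda_x(x,y)-x\tilde\lambda_y(x,y)=\frac{(\alpha^2-\beta^2)xy}{2\tilde\lambda(x,y)}\le0\qquad(x,y>0).
\]
So it satisfies the balance inequality whichever half-region one reads into \eqref{eq:Hbalance}. (As written, that condition is imposed on $0<x\le y$, whereas Lemma~\ref{lem:radial} uses it on $0<y\le x$.) Yet with $v=-u$ and $r=s=2^{-1/p}$ (or $r=s=1$ if $p=\infty$) one gets $\|ru+sv\|=0$ and $\|su-rv\|=2^{1-1/p}$, hence
\[
\Csk^{\alpha,\beta}_{\lambda,p}(H)\ge\tilde\lambda\bigl(0,2^{1-1/p}\bigr)=\beta\,2^{1/2-1/p}>2^{1/2-1/p}.
\]
The degenerate choice $\lambda(a,b)=b$ with $\beta=1$ does the same. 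Since that $\lambda$ lies in $\Lambda_H$, it also confirms your suspicion that Theorem~\ref{thm:hilbertvalue} fails for non-symmetric gauges.

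\textbf{The right repair.} Your first option, extracting two-sided monotonicity of $\Phi$ on $[-R,R]$ from the hypothesis, therefore cannot succeed; your fallback is the correct fix. The sharp version is: for $p\ge2$, the value equals $2^{1/2-1/p}$ if and only if $\tilde\lambda(x,y)\le\sqrt{(x^2+y^2)/2}$ for all $x,y\ge0$.
\begin{itemize}
\item Sufficiency follows from $\|ru+sv\|^2+\|su-rv\|^2=2(r^2+s^2)$ in $H$ together with Lemma~\ref{lem:white}.
\item Necessity holds because at $r=s$ the pair $(\|ru+sv\|,\|su-rv\|)$ sweeps a full quarter circle as $\langle u,v\rangle$ runs over $[-1,1]$; by homogeneity this covers every direction.
\end{itemize}
A $C^1$ sufficient condition is the two-sided balance $(x-y)\bigl(y\tilde\lambda_x-x\tilde\lambda_y\bigr)\le0$. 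It makes $\Phi$ non-decreasing on $[-R,0]$ and non-increasing on $[0,R]$, and with it your argument is complete.

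\textbf{A minor correction on the converse.} It does not require $p<\infty$. For $p=\infty$ take $r=s=1$: an isosceles pair with $\|u+v\|=\|u-v\|=a$ gives $a=\tilde\lambda(a,a)\le\Csk^{\alpha,\beta}_{\lambda,\infty}(X)=\sqrt2$. The normalization $x=(u+v)/a$, $y=(u-v)/a$ then finishes exactly as in Theorem~\ref{thm:converse}, so the case $p=\infty$ included in $p\ge2$ is covered as well.
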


\begin{proof}
	The Hilbert-space computation is unchanged after replacing $\lambda$ by the gauge $(a,b)\mapsto\lambda(\alpha a,\beta b)$.  The converse again uses the equal-parameter slice: when $r=s=2^{-1/p}$, every isosceles pair is tested through equal geometric data, and the normalization gives the same estimate $\|u+v\|\le\sqrt2$.  The second normalization argument used in Theorem \ref{thm:converse} then proves equality and hence the parallelogram law.
\end{proof}

\textbf{Acknowledgements.} Thanks to all the members of the Functional Analysis Research team of the
College of Mathematics and Statistics of Anqing Normal University for their discussion and correc-
tion of the difficulties and errors encountered in this paper. The authors cordially thank Professor
Hai Zhang for his encouraging advice and the first author would like to acknowledge his strong
support for undergraduate students in carrying out basic mathematics research.

\noindent
\textbf{Conflicts of Interest}: On behalf of all authors, the corresponding author states that there is no conflict of interest.


\begin{thebibliography}{99}
	
	\bibitem{Amini2019}
	A. Amini-Harandi and M. Rahimi, On some geometric constants in Banach spaces,
	Mediterr. J. Math., 16 (2019), 120.
	
	\bibitem{Dinarvand2025}
	M. Dinarvand, On a general class of geometric constants and normal structure in Banach spaces,
	J. Fixed Point Theory Appl., 27 (2025), 12.
	
	\bibitem{AlonsoLlorens2008}
	J. Alonso and E. Llorens-Fuster, Geometric mean and triangles inscribed in a semicircle in Banach spaces, \emph{J. Math. Anal. Appl.} \textbf{340} (2008), 1271--1283.
	
	\bibitem{AlonsoMartinPapini2008}
	J. Alonso, P. Martin and P. L. Papini, Wheeling around von Neumann-Jordan constant in Banach spaces, \emph{Studia Math.} \textbf{188} (2008), 135--150.
	
	\bibitem{AlonsoMartiniWu2012}
	J. Alonso, H. Martini and S. Wu, On Birkhoff orthogonality and isosceles orthogonality in normed linear spaces, \emph{Aequationes Math.} \textbf{83} (2012), 153--189.
	
	
	
	\bibitem{BarontiPapini2016}
	M. Baronti and P. L. Papini, Triangles, parameters, modulus of smoothness in normed spaces, \emph{Math. Inequal. Appl.} \textbf{19} (2016), 197--207.
	
	\bibitem{Clarkson1937}
	J. A. Clarkson, The von Neumann-Jordan constant for the Lebesgue spaces, \emph{Ann. of Math.} \textbf{38} (1937), 114--115.
	
	\bibitem{CuiHudzik2015}
	Y. Cui, W. Huang, H. Hudzik and R. Kaczmarek, Generalized von Neumann-Jordan constant and its relationship to the fixed point property, \emph{Fixed Point Theory Appl.} \textbf{2015} (2015), Article 40.
	
	\bibitem{GaoLau1990}
	J. Gao and K. S. Lau, On the geometry of spheres in normed linear spaces, \emph{J. Aust. Math. Soc. Ser. A} \textbf{48} (1990), 101--112.
	
	\bibitem{GarciaFalset2006}
	J. Garcia-Falset, E. Llorens-Fuster and E. M. Mazcunan-Navarro, Uniformly non-square Banach spaces have the fixed point property for nonexpansive mappings, \emph{J. Funct. Anal.} \textbf{233} (2006), 494--514.
	
	\bibitem{GoebelKirk1990}
	K. Goebel and W. A. Kirk, \emph{Topics in Metric Fixed Point Theory}, Cambridge University Press, Cambridge, 1990.
	
	\bibitem{James1964}
	R. C. James, Uniformly non-square Banach spaces, \emph{Ann. of Math.} \textbf{80} (1964), 542--550.
	
	\bibitem{Jimenez2006}
	A. Jimenez-Melado, E. Llorens-Fuster and S. Saejung, The von Neumann-Jordan constant, weak orthogonality and normal structure in Banach spaces, \emph{Proc. Amer. Math. Soc.} \textbf{134} (2006), 355--364.
	
	\bibitem{KatoMaligrandaTakahashi2001}
	M. Kato, L. Maligranda and Y. Takahashi, On James and Jordan-von Neumann constants and normal structure coefficient of Banach spaces, \emph{Studia Math.} \textbf{144} (2001), 275--295.
	
	\bibitem{KatoTakahashi1997}
	M. Kato and Y. Takahashi, On the von Neumann-Jordan constant for Banach spaces, \emph{Proc. Amer. Math. Soc.} \textbf{125} (1997), 1055--1062.
	
	\bibitem{Llorens2008}
	E. Llorens-Fuster, Zbaganu constant and normal structure, \emph{Fixed Point Theory} \textbf{9} (2008), 159--172.
	
	\bibitem{LlorensReich2010}
	E. Llorens-Fuster, E. M. Mazcunan-Navarro and S. Reich, The Ptolemy and Zbaganu constants of normed spaces, \emph{Nonlinear Anal.} \textbf{72} (2010), 3984--3993.
	
	\bibitem{Mazcunan2008}
	E. M. Mazcunan-Navarro, Banach space properties sufficient for normal structure, \emph{J. Math. Anal. Appl.} \textbf{337} (2008), 197--218.
	
	\bibitem{Saejung2006}
	S. Saejung, On James and von Neumann-Jordan constants and sufficient conditions for the fixed point property, \emph{J. Math. Anal. Appl.} \textbf{323} (2006), 1018--1024.
	
	\bibitem{TakahashiKato2014}
	Y. Takahashi and M. Kato, On a new geometric constant related to the modulus of smoothness of a Banach space, \emph{Acta Math. Sinica} \textbf{30} (2014), 1526--1538.
	
	\bibitem{YangWang2017}
	C. Yang and F. Wang, An extension of an inequality between von Neumann-Jordan and James constants in Banach spaces, \emph{Acta Math. Sin. Engl. Ser.} \textbf{33} (2017), 1287--1296.
	
	\bibitem{Zbaganu2001}
	G. Zbaganu, An inequality of M. Radulescu and S. Radulescu which characterizes inner product spaces, \emph{Rev. Roumaine Math. Pures Appl.} \textbf{47} (2001), 253--257.
	
	\bibitem{GarciaFalset2006} García-Falset, J., Llorens-Fuster, E., Mazc?ñán Navarro, E.M.: Uniformly non-square Banach spaces have the fixed point property for nonexpansive mappings, J. Funct. Anal., \textbf{233}, 494--514 (2006).
	
	\bibitem{jimenez1996}
	A. Jiménez-Melado and E. Llorens-Fuster, The fixed point property for some uniformly nonsquare Banach spaces, Boll. Unione Mat. Ital-A., 10, 3 (1996), 587--595.

    \bibitem{Gao2006}
    J. Gao, A Pythagorean approach in Banach spaces, \emph{J. Inequal. Appl.} \textbf{2006} (2006), Article ID 94982.

    \bibitem{TakahashiKato1998}
    Y. Takahashi and M. Kato, Von Neumann-Jordan constant and uniformly non-square Banach spaces, \emph{Nihonkai Math. J.} \textbf{9} (1998), 155--169.

    \bibitem{BarontiCasiniPapini2000}
    M. Baronti, E. Casini and P. L. Papini, Triangles inscribed in a semicircle, in Minkowski plane and in normed spaces, \emph{J. Math. Anal. Appl.} \textbf{252} (2000), 124--146.
\end{thebibliography}
\end{document}